\documentclass[review]{elsarticle}
\usepackage{lineno}
\usepackage[latin1]{inputenc}
\usepackage{times}
\usepackage{amssymb,mathrsfs, amsmath}
\usepackage{amsmath,amsthm}
\usepackage{amssymb}
\usepackage{latexsym}
\usepackage{amsfonts}
\usepackage{mathrsfs}
\usepackage{epsfig}
\usepackage{hyperref}
\usepackage{graphicx}
\usepackage{graphics}

\newtheorem{thm}{Theorem}[section]

\newtheorem{defn}{Definition}[section]
\newtheorem{prop}{Proposition}[section]

\newtheorem{lem}{Lemma}[section]

\newtheorem{rem}{Remark}[section]

\newtheorem{cor}{Corollary}[section]

\newtheorem{exmpl}{Example}[section]

%\modulolinenumbers[5]

\journal{nowhere}

%%%%%%%%%%%%%%%%%%%%%%%
%% Elsevier bibliography styles
%%%%%%%%%%%%%%%%%%%%%%%
%% To change the style, put a % in front of the second line of the current style and
%% remove the % from the second line of the style you would like to use.
%%%%%%%%%%%%%%%%%%%%%%%

%% Numbered
%\bibliographystyle{model1-num-names}

%% Numbered without titles
%\bibliographystyle{model1a-num-names}

%% Harvard
%\bibliographystyle{model2-names.bst}\biboptions{authoryear}

%% Vancouver numbered
%\usepackage{numcompress}\bibliographystyle{model3-num-names}

%% Vancouver name/year
%\usepackage{numcompress}\bibliographystyle{model4-names}\biboptions{authoryear}

%% APA style
%\bibliographystyle{model5-names}\biboptions{authoryear}

%% AMA style
%\usepackage{numcompress}\bibliographystyle{model6-num-names}

%% `Elsevier LaTeX' style
\bibliographystyle{elsarticle-num}
%%%%%%%%%%%%%%%%%%%%%%%

\begin{document}

\begin{frontmatter}

\title{Equivariant one-parameter deformations of   associative algebra morphisms}
%\tnotetext[mytitlenote]{Fully documented templates are available in the elsarticle package on \href{http://www.ctan.org/tex-archive/macros/latex/contrib/elsarticle}{CTAN}.}

%%% Group authors per affiliation:
%%\author{\fnref{myfootnote}}
%%\address{}
%%\fntext[myfootnote]{This author is supported by CSIR,\textsc{India}.}
%\author{RB Yadav \fnref{myfootnote}\corref{mycorrespondingauthor}}
%\address{Indian Statistical Institute Tezpur, Assam 784028, \textsc{India}}
%\cortext[mycorrespondingauthor]{Corresponding author}
%\ead{rbyadav15@gmail.com}
%\author{Goutam Mukherjee\fnref{myfootnote}\corref{mycorrespondingauthor}}
%\address{Stat-Math Division,  Indian Statistical Institute, Kolkata, 700108, \textsc{India}}
%\cortext[mycorrespondingauthor]{Corresponding author}
%\ead{goutam@isical.ac.in}
%\fntext[myfootnote]{This author is supported by CSIR,\textsc{India}.}
%
%%
\author{RB Yadav}
\address{Sikkim University, Gangtok, Sikkim, 737102, \textsc{India}}
\ead{rbyadav15@gmail.com}
\begin{abstract}
In this article, we introduce equivariant formal deformation theory of associative algebra morphisms. We  introduce  an equivariant deformation cohomology of associative algebra morphisms  and using this we study the   equivariant formal deformation theory of associative algebra morphisms.

\end{abstract}

\begin{keyword}
\texttt{Group actions, Hochschild cohomology, formal deformations, equivariant cohomology}
\MSC[2010]   13D03 \sep 13D10 \sep 14D15 \sep 16E40 \sep  55M35 \sep 55N91
\end{keyword}

\end{frontmatter}

%\linenumbers

%\maketitle

\section{Introduction}\label{rbsec1}
Origin of the idea of deformation theory goes back to a paper of Riemann on abelian functions published in 1857. Kodaira and Spencer initiated deformation theory  of complex analytic structures \cite{KS1}, \cite{KS2}, \cite{KS3}. M. Gerstenhaber introduced algebraic deformation theory in a series of papers \cite{MG1},\cite{MG2},\cite{MG3}, \cite{MG4}, \cite{MG5}. He studied deformation theory of associative algebras.  Deformation theory of associative algebra morphisms was introduced by M. Gerstenhaber and S.D. Schack \cite{GS1}, \cite{GS2}, \cite{GS3}. Nijenhuis and Richardson introduced deformation theory of Lie algebras  \cite{NR1}, \cite{NR2}. Deformation theory  of dialgebras has  been studied by Majumdar and Mukherjee  \cite{AG}. Recently, deformation theory of dialgebra morphisms and Leibniz algebra morphisms have been studied by Donald Yau and Ashis Mondal respectively \cite{DY}, \cite{AM}. Equivariant deformation theory of associative algebras has been studied in \cite{gmrb}.

Organization of the paper is as follows. In Section \ref{rbsec2}, we recall some definitions and results. In Section \ref{rbsec3}, we introduce equivariant deformation complex and equivariant deformation cohomology of an associative algebra morphism. In Section \ref{rbsec4}, we introduce equivariant deformation of an associative algebra morphism. In this section  we prove  that obstructions to equivariant deformations are cocycles. In Section \ref{rbsec5}, we study equivalence of two equivariant deformations and rigidity of an equivariant associative algebra morphism.

\section{Preliminaries}\label{rbsec2}
In this section, we recall definitions of associative algebra, associative algebra morphisms, Hochschild cohomology  and equivariant deformation cohomology of an associative algebra. Also, we recall definitions of a module over an associative algebra and module over an associative algebra morphism.  Throughout the paper we denote a fixed field  by k and a finite group by G.
\begin{defn}
  An associative algebra A is a k-module equipped with a k-bilinear map $\mu$ satisfying
  $$\mu(a,\mu(b, c))=\mu(\mu(a, b), c),$$ for all $a,b,c\in A$

  Let A be an associative k-algebra. A bimodule M over A is a k-module M with two actions (left and right) of A, $\mu:A\times M\to M$ and $\mu:M\times A\to M $ (for simplicity we denote both the actions by same symbol, one can differentiate both of them from the context) such that $\mu(x,\mu(y,z))=\mu(\mu(x,y),z),$ whenever one of x,y,z is from M and others are from A.

  Let A and B be associative k-algebras. An associative algebra morphism $\phi:A\to B$ is a k-linear map satisfying $$\phi(\mu(a, b))=\mu(\phi a,\phi b),$$ for all $a,b\in A.$

\end{defn}
\begin{exmpl}
  Let $M_n(\mathbb{R})$ be the collection of all $n\times n$ matrices with entries in $\mathbb{C}$. Then $M_n(\mathbb{C})$ is an associative algebra over $\mathbb{C}$ with respect to matrix multiplication.
\end{exmpl}

\begin{exmpl}
  Let $X$ be a nonempty set and  $A=\{\alpha : \alpha:X\to \mathbb{C}  \;\text{is a function}\}$. Then A is an associative algebra over $\mathbb{C}$ with the product $(\alpha\beta)(x)=\alpha(x)\beta(x).$
\end{exmpl}

\begin{exmpl}
Let V be a vector space over k. Define tensor module by  $$T(V)=k\oplus V\oplus \cdots\oplus V^{\otimes n}\oplus\cdots$$  $T(V)$ is an associative algebra with the concatenation product $T(V)\otimes T(V)\to T(V)$ given by $$v_1\cdots v_p\otimes v_{p+1}\cdots v_{p+q}=v_1\cdots v_p v_{p+1}\cdots v_{p+q}.$$
\end{exmpl}

\begin{defn}
  Let A be an associative k-algebra and M be a bimodule over A.  Let $C^n(A;M)=\hom_k(A^{\otimes n},M),$ for all integers $n\ge 0.$ Also, define a k-linear map $\delta^n:C^n(A;M)\to C^{n+1}(A;M)$ given by
  \begin{eqnarray*}
  % \nonumber % Remove numbering (before each equation)
    \delta ^nf(x_1,\cdots, x_{n+1}) &=& x_1f(x_2,\cdots,x_{n+1})+\sum_{i=1}^{n}(-1)^if(x_1,\cdots,x_ix_{i+1}, \cdots, x_{n+1}) \\
     && +(-1)^{n+1}f(x_1,\cdots,x_n)x_{n+1}.
  \end{eqnarray*} This gives a cochain complex  $(C^{\ast}(A;M),\delta)$, cohomology of which is denoted by $H^{\ast}(A;M)$ and called as Hochschild cohomology of A with coeffiecients in M. A is a bimodule over itself in an obvious way. So we can consider Hochschild cohomology $H^{\ast}(A;A)$.\\
  Let A be an associative k-algebra with product $\mu (a, b) = ab$ and G be a finite group. The group G is said to act on A from the left if there exists a function
$$\phi:G\times  A \to A\;\;\;\; (g, a)\mapsto \phi(g, a) = ga$$
satisfying the following conditions.
\begin{enumerate}
  \item $ex = x$ for all $x \in A$, where $e \in G$ is the group identity.
  \item $g_1(g_2x) = (g_1g_2)x$ for all $g_1, g_2 \in G$ and $x \in A$.
  \item  For every $g \in G$, the left translation $\phi_g = \phi (g, ) : A \to A,$ $a \to ga$ is a linear
map.
  \item For all $g \in G$ and $a, b \in A$,  $\mu(ga, gb) = g\mu(a, b) = g(ab)$, that is, $\mu$ is equivari-
ant with respect to the diagonal action on $A \times A$.
\end{enumerate}

An action as above is denoted  by $(G,A)$ \cite{gmrb}.
Define $$C_G^n(A;M)=\{c\in C^n(A;M): c(gx_1,\cdots, gx_n)=gc(x_1,\cdots, x_n), \text{ for all}\; g\in G\}$$
An element in $C_G^n(A;M)$ is called an invariant n-cochain.  Clearly, $C_G^n(A;M)$ is a submodule of $C^n(A;M)$.
\end{defn}

From \cite{gmrb} we have following lemma.
\begin{lem}\label{rb1}
  If an n-cochain c is invariant then $\delta^n(c)$ is also invariant. In other words, $$c \in C_G^n(A;M) \Rightarrow \delta^n(c) \in C_G^{n+1}(A;M)$$
\end{lem}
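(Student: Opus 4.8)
The plan is a direct, term-by-term verification carried out in a single cochain degree; no induction on $n$ is needed. Fix $g\in G$ and $x_1,\cdots,x_{n+1}\in A$, expand $\delta^n(c)(gx_1,\cdots,gx_{n+1})$ using the defining formula for $\delta^n$, and check that every summand equals $g$ applied to the corresponding summand of $\delta^n(c)(x_1,\cdots,x_{n+1})$; then pull $g$ out of the signed sum using linearity of $\phi_g$.

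The three ingredients I would isolate first are: (i) the hypothesis $c\in C_G^n(A;M)$, i.e. $c(gy_1,\cdots,gy_n)=g\,c(y_1,\cdots,y_n)$; (ii) condition (4) of the $G$-action on $A$, namely $(gx_i)(gx_{i+1})=g(x_ix_{i+1})$; and (iii) equivariance of the bimodule structure on $M$, $(gx)(gm)=g(xm)$ and $(gm)(gx)=g(mx)$. Granting these, the leading term $(gx_1)\,c(gx_2,\cdots,gx_{n+1})$ becomes $(gx_1)\bigl(g\,c(x_2,\cdots,x_{n+1})\bigr)=g\bigl(x_1c(x_2,\cdots,x_{n+1})\bigr)$ by (i) then (iii); each middle term uses (ii) to recognise $(gx_1,\cdots,gx_ig x_{i+1},\cdots,gx_{n+1})$ as the $g$-translate of $(x_1,\cdots,x_ix_{i+1},\cdots,x_{n+1})$ and then (i) to move $g$ outside $c$; and the trailing term $c(gx_1,\cdots,gx_n)(gx_{n+1})$ becomes $g\bigl(c(x_1,\cdots,x_n)x_{n+1}\bigr)$ by (i) then (iii). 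Since $\phi_g$ is $k$-linear by condition (3), it commutes with the alternating sum, so the three pieces reassemble to $g\,\delta^n(c)(x_1,\cdots,x_{n+1})$, proving $\delta^n(c)\in C_G^{n+1}(A;M)$.

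The computation is routine; the only point deserving attention is ingredient (iii), since the definitions recalled in Section \ref{rbsec2} spell out the $G$-action explicitly only on the algebra $A$. For the case $M=A$ relevant to deformation theory this is nothing but condition (4) again, and in general one takes $M$ to be an equivariant $A$-bimodule, for which (iii) holds by definition. This argument is the one given in \cite{gmrb}, which I would simply reproduce.
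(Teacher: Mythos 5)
Your proof is correct and is exactly the standard direct verification; the paper itself gives no proof of this lemma (it is quoted from \cite{gmrb}), and your term-by-term computation is the argument intended there. You are also right to flag the one genuine subtlety, namely that the bimodule action of $A$ on $M$ must itself be $G$-equivariant for the middle and outer terms to work --- in the only case the paper uses ($M=A$, and later $M=B$ via an equivariant morphism) this is supplied by condition (4) of the group action, so nothing is missing.
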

From \cite{gmrb}, we have $(C_G^{\ast}(A;M), \delta)$  is a cochain complex. Cohomology of this complex is called equivariant deformation cohomology of A.
\begin{defn}
  Let $A$ and $B$ be associative k-algebras,  and $\phi :A\to B$ be an associative algebra morphism. Let $M$ and $N$ be  k-modules. A k-linear map $T:M\to N$ is said to be a left (or right) module over $\phi$ if following conditions are satisfied.
  \begin{enumerate}
    \item  $M$ and $N$ are left ( or right) modules over $A$ and $B$ respectively.
    \item T  is a left (or right) A-module morphism when N is viewed as a left (or right) A-module by virtue of the morphism $\phi :A\to B.$
  \end{enumerate}
   A k-linear map $T:M\to N$ is said to be a bimodule over $\phi$ if $T$ is a left as well as right module over $\phi$.
\end{defn}
From \cite{gmrb}, we recall equivariant deformation of an associative algebra  morphism.
\begin{defn}
Let A be an associative k-algebra with a $(G,A)$ action. Define $A_t=\{\sum_{i=0}^{\infty}a_it^i:a_i\in A\}.$  An equivariant formal one-parameter deformation of A is a k-bilinear multiplication $m_t:A_t\times A_t\to A_t$ satisfying the following properties:
\begin{enumerate}
  \item $m_t(a,b)=\sum_{i=0}^{\infty}m_i(a,b)t^i,$ for all $a,b\in A,$ where $m_i:A\times A\to A$ are k-bilinear and $m_0(a,b)=ab$ is the original multiplication on A, and $m_t$ is associative.
  \item For every $g\in G$, $m_i(ga,gb)=gm_i(a,b)$, for all $a,b\in A$, $i\ge 1,$ that is $m_i\in Hom_k^G(A\otimes A, A),$ for all $i\ge 1.$
\end{enumerate}
\end{defn}
\section{Equivariant Deformation complex of  an associative algebra morphism}\label{rbsec3}
In this section, we introduce equivariant deformation complex of an  associative algebra morphism.  In the subsequent sections we show that second and third cohomology of this complex controls deformation.
\begin{defn}
  Let $G$ be a finite group, A and B be associative algebras with actions $(G,A)$ and $(G,B)$ respectively.  A G-equivariant associative algebra morphism $\phi:A\to B$  is defined to be an  associative algebra morphism such that $\phi(ga)=g\phi(a)$, for all $a\in A$, $g\in G.$
\end{defn}
\begin{defn}
    Let  $\phi:A\to B$  be  a G-equivariant associative algebra morphism.
     \begin{enumerate}
       \item  An equivariant left (or right)  module over $\phi$ is defined to be a G-equivariant k-linear map $T:M\to N$ which is a left (or right) module over $\phi$.
       \item  An equivariant bimodule over $\phi$ is defined to be a G-equivariant k-linear map $T:M\to N$ which is a bimodule over $\phi$. In particular, $\phi$ is an equivariant bimodule over itself.
     \end{enumerate}

\end{defn}
\begin{defn}
Let $\phi:A\to B$ be a G-equivariant associative algebra morphism and $T:M\to N$ be an equivariant bimodule over $\phi$. We define
$$C_G^n(\phi;T)=C_G^n(A;M)\oplus C_G^n(B;N)\oplus C_G^{n-1}(A;N),$$ for all $n\in \mathbb{N}$ and $C_G^0(\phi;T)=0$. Also, we define $d^n:C_G^n(\phi;T)\to C_G^{n+1}(\phi;T)$ by $$d^n(u,v,w)=(\delta^n u, \delta^n v, Tu-v\phi -\delta^{n-1}w ),$$ for all $(u,v,w)\in C_G^n(\phi;T).$ Here the $\delta^n$'s denote coboundaries of the cochain complexes $C_G^{\ast}(A;M)$,  $C_G^{\ast}(B;N)$ and $C_G^{\ast}(A;N)$, the  $Tu$ denotes the composition $T\circ u$ of $T$ and $u$ and the map $v\phi:A^{\otimes n}\to N$ is defined by $v\phi(x_1, x_2,\cdots, x_n)=v(\phi(x_1), \phi(x_2), \cdots, \phi (x_n)).$ From Lemma \ref{rb1}, one can easily verify that $d^n$ is well defined.
\end{defn}
\begin{prop}
  $(C_G^{\ast}(\phi;T), d)$ is a cochain complex.
\end{prop}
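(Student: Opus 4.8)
The plan is to show that $d^{n+1}\circ d^n = 0$ for all $n$, since the fact that each $d^n$ is well defined (i.e., maps invariant cochains to invariant cochains) has already been granted via Lemma~\ref{rb1}. I would take an arbitrary element $(u,v,w)\in C_G^n(\phi;T)$ and compute $d^{n+1}(d^n(u,v,w))$ componentwise. Writing $d^n(u,v,w) = (\delta^n u,\ \delta^n v,\ Tu - v\phi - \delta^{n-1}w)$, applying $d^{n+1}$ gives a triple whose first two slots are $\delta^{n+1}\delta^n u$ and $\delta^{n+1}\delta^n v$, both of which vanish because $(C_G^\ast(A;M),\delta)$ and $(C_G^\ast(B;N),\delta)$ are already known to be cochain complexes (this is the content of Lemma~\ref{rb1} together with the remarks following it). So the only real work is the third slot.

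For the third component, after substituting, one must verify that
$$T(\delta^n u) - (\delta^n v)\phi - \delta^n\bigl(Tu - v\phi - \delta^{n-1}w\bigr) = 0.$$
Here the term $\delta^n\delta^{n-1}w$ drops out since $\delta\circ\delta=0$ on $C_G^\ast(A;N)$. What remains is to check the identity
$$T(\delta^n u) = \delta^n(Tu) \quad\text{and}\quad (\delta^n v)\phi = \delta^n(v\phi),$$
that is, that the Hochschild coboundary commutes with post-composition by the bimodule map $T$ and with pre-composition by the algebra morphism $\phi$. The first is exactly the statement that $T\colon M\to N$ is a morphism of $A$-bimodules (where $N$ carries the $A$-action pulled back along $\phi$): applying $T$ to each of the $n+2$ terms of $\delta^n u(x_1,\dots,x_{n+1})$ and using $T(x_1\cdot m) = x_1\cdot T(m)$ and $T(m\cdot x_{n+1}) = T(m)\cdot x_{n+1}$ reproduces $\delta^n(Tu)$ term by term. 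The second follows because $\phi$ is an algebra morphism: in $\delta^n(v\phi)(x_1,\dots,x_{n+1})$ the inner multiplications $\phi(x_ix_{i+1}) = \phi(x_i)\phi(x_{i+1})$ and the outer actions $\phi(x_1)\cdot v(\cdots)$, $v(\cdots)\cdot\phi(x_{n+1})$ (the latter using that $N$ is a $B$-module and $v$ lands in $N$) match the corresponding terms of $(\delta^n v)\phi$. Combining, the third slot is $\delta^n(Tu) - \delta^n(v\phi) - \delta^n(Tu) + \delta^n(v\phi) = 0$.

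The genuinely routine part is the term-by-term bookkeeping in these two sub-identities; the only place to be slightly careful is keeping track of which module structure (the intrinsic $B$-action on $N$ versus the $A$-action via $\phi$) is being used at each step, and of the sign $(-1)^{n+1}$ on the last term when shifting degrees. I expect the main obstacle — such as it is — to be purely notational: making sure the indices align correctly when $\delta^n$ is applied to $Tu - v\phi - \delta^{n-1}w$, whose three summands live a priori in slightly different guises, and confirming that all intermediate cochains remain $G$-invariant so that the computation takes place entirely inside $C_G^\ast(\phi;T)$ (which is immediate from Lemma~\ref{rb1} and the observation that $T$, $\phi$ are $G$-equivariant, so $Tu$, $v\phi$ are invariant whenever $u$, $v$ are). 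Once these identities are in place, $d^{n+1}\circ d^n = 0$ follows, and since $C_G^0(\phi;T)=0$ the complex is a genuine (non-negatively graded) cochain complex, completing the proof.
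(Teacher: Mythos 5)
Your proposal is correct and follows essentially the same route as the paper: compute $d^{n+1}d^n$ componentwise, kill the first two slots by $\delta\circ\delta=0$, and reduce the third slot to the identity $\delta^n(Tu - v\phi) = T(\delta^n u) - (\delta^n v)\phi$. The only difference is that the paper dismisses this identity with ``one can easily see,'' whereas you actually justify it (via $T$ being a bimodule morphism over $\phi$ and $\phi$ being an algebra morphism), which is a welcome addition rather than a deviation.
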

\begin{proof}
  We have \begin{eqnarray*}
          % \nonumber % Remove numbering (before each equation)
           d^{n+1}d^n(u,v,w)   &=& d^{n+1}(\delta^n u, \delta^n v, Tu-v\phi -\delta^{n-1}w ) \\
             &=& (\delta^{n+1}\delta^n u, \delta^{n+1}\delta^n v, T(\delta^n u)- (\delta^n v)\phi- \delta^n( Tu-v\phi-\delta^{n-1}w))
          \end{eqnarray*} One can easily see that $\delta^n( Tu-v\phi)=T(\delta^n u)- (\delta^n v)\phi$. So, since $\delta^{n+1}\delta^n u= \delta^{n+1}\delta^n v=\delta^{n+1}\delta^n w=0$, we have $d^{n+1}d^n=0$. Hence we conclude the result.
\end{proof}

We call the cochain complex $(C_G^{\ast}(\phi,\phi),d)$  as equivariant deformation complex of $\phi,$ and the corresponding cohomology as equivariant deformation cohomology of $\phi$. We denote the equivariant deformation cohomology by $H_G^n(\phi,\phi)$, that is $H_G^n(\phi,\phi)=H^n(C_G^{\ast}(\phi,\phi),d)$. Next proposition relates $H_G^{\ast}(\phi,\phi)$ to $H_G^{\ast}(A,A)$,  $H_G^{\ast}(B,B)$ and $H_G^{\ast}(A,B)$.

\begin{prop}\label{rb-99}
  If $H_G^n(A,A)=0$,  $H_G^n(B,B)=0$ and $H_G^{n-1}(A,B)=0$, then $H_G^{n}(\phi,\phi)=0$.
\end{prop}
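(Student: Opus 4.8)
The plan is to show that an arbitrary $n$-cocycle $(u,v,w) \in C_G^n(\phi,\phi)$ is a coboundary under the stated vanishing hypotheses. Unwinding the definition of $d^n$, the cocycle condition $d^n(u,v,w) = 0$ splits into three equations: $\delta^n u = 0$, $\delta^n v = 0$, and $Tu - v\phi - \delta^{n-1}w = 0$ (with $T = \phi$). The first two say that $u$ is an $n$-cocycle in $C_G^{\ast}(A,A)$ and $v$ is an $n$-cocycle in $C_G^{\ast}(B,B)$; since $H_G^n(A,A) = 0$ and $H_G^n(B,B) = 0$, there exist invariant cochains $u' \in C_G^{n-1}(A,A)$ and $v' \in C_G^{n-1}(B,B)$ with $u = \delta^{n-1}u'$ and $v = \delta^{n-1}v'$.

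Next I would produce the third component of a primitive. We want $(u',v',w') \in C_G^{n-1}(\phi,\phi)$ with $d^{n-1}(u',v',w') = (u,v,w)$, i.e. $\delta^{n-1}u' = u$, $\delta^{n-1}v' = v$, and $\phi u' - v'\phi - \delta^{n-2}w' = w$. The first two hold by choice of $u',v'$; the remaining requirement is $\delta^{n-2}w' = \phi u' - v'\phi - w$. For this to be solvable using $H_G^{n-1}(A,B) = 0$, it suffices to check that $z := \phi u' - v'\phi - w$ is an invariant $(n-1)$-cocycle in $C_G^{\ast}(A,B)$. Invariance is immediate since $\phi$, $u'$, $v'$, $w$ are all equivariant and composition/precomposition with $\phi$ preserves invariance (Lemma \ref{rb1} guarantees $\delta$ preserves invariance as well). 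The key computation is $\delta^{n-1}z = 0$: using the identity $\delta^n(\phi u' - v'\phi) = \phi(\delta^{n-1}u') - (\delta^{n-1}v')\phi$ already established in the proof that $(C_G^{\ast}(\phi;T),d)$ is a complex, we get $\delta^{n-1}z = \phi u - v\phi - \delta^{n-1}w$, and this vanishes precisely by the third cocycle equation for $(u,v,w)$. Hence $z$ is an $(n-1)$-cocycle in $C_G^{\ast}(A,B)$, so by hypothesis $z = \delta^{n-2}w'$ for some invariant $w' \in C_G^{n-2}(A,B)$.

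Assembling these, $(u',v',w') \in C_G^{n-1}(\phi,\phi)$ satisfies $d^{n-1}(u',v',w') = (\delta^{n-1}u', \delta^{n-1}v', \phi u' - v'\phi - \delta^{n-2}w') = (u,v,w)$, so every $n$-cocycle is a coboundary and $H_G^n(\phi,\phi) = 0$.

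The only genuinely non-routine step is verifying that $z = \phi u' - v'\phi - w$ is a cocycle, and as noted this reduces to the already-available identity $\delta(\phi u' - v'\phi) = \phi(\delta u') - (\delta v')\phi$ together with the third component of the cocycle equation; everything else is bookkeeping with direct sums and invariance. One should take mild care at small $n$ (where $C_G^0(\phi;T) = 0$ and the $C_G^{-1}$ terms are zero) to confirm the argument degenerates correctly, but no new idea is needed there.
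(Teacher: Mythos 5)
Your proof is correct and follows essentially the same route as the paper: lift $u$ and $v$ to primitives $u',v'$ via the vanishing of $H_G^n(A,A)$ and $H_G^n(B,B)$, observe that $\phi u'-v'\phi-w$ is an $(n-1)$-cocycle in $C_G^{\ast}(A,B)$ using the commutation identity $\delta(\phi u'-v'\phi)=\phi(\delta u')-(\delta v')\phi$ together with the third cocycle equation, and then kill it by $H_G^{n-1}(A,B)=0$. The only difference is notational, plus your (welcome but not essential) remark about the degenerate small-$n$ cases.
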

\begin{proof}
  Let $(u,v,w)\in C_G^n(\phi,\phi)$ be a cocycle, that is $d^n(u,v,w)=(\delta^n u, \delta^n v, \phi u-v\phi -\delta^{n-1}w )=0$. This implies that $\delta^n u=0$, $\delta^n v=0$, $\phi u-v\phi -\delta^{n-1}w =0$. $H_G^n(A,A)=0 \Rightarrow u=\delta^{n-1}u_1$ and  $H_G^n(B,B)=0\Rightarrow \delta^{n-1}v_1=v$, for some $u_1\in C_G^{n-1}(\phi,\phi)$ and $v_1\in C_G^{n-1}(\phi,\phi)$. So $0=\phi u-v\phi -\delta^{n-1}w=\phi(\delta^{n-1}u_1)-(\delta^{n-1}v_1)\phi-\delta^{n-1}w=\delta^{n-1}(\phi u_1)-\delta^{n-1}(v_1\phi)-\delta^{n-1}w=\delta^{n-1}(\phi u_1-v_1\phi-w)$. So $\phi u_1-v_1\phi-w\in C_G^{n-1}(A,B)$ is a cocycle. Now,  $H_G^{n-1}(A,B)=0\Rightarrow \phi u_1-v_1\phi-w=\delta^{n-2}w_1\Rightarrow \phi u_1-v_1\phi-\delta^{n-2}w_1=w$. Thus $(u,v,w)=(\delta^{n-1} u_1, \delta^{n-1 }v_1, \phi u_1-v_1\phi -\delta^{n-2}w_1 )=d^{n-1}(u_1,v_1,w_1)$, for some $(u_1,v_1,w_1)\in C_G^{n-1}(\phi,\phi).$  Thus every cocycle in $C_G^n(\phi,\phi)$ is a coboundary. Hence we conclude that $H_G^{n}(\phi,\phi)=0$.
\end{proof}
\section{ Equivariant deformation of  an associative algebra morphism}\label{rbsec4}
\begin{defn}\label{rb2}

Let A and B be associative k-algebras with actions $(G,A)$ and $(G,B)$ respectively. An equivariant deformation of a G-equivariant associative algebra morphism $\phi:A\to B$ is a triple $(\mu_t, \nu_t, \phi_t)$, in which:

\begin{enumerate}
  \item  $\mu_t=\sum_{i=0}^{\infty}\mu_i t^i$ is an  equivariant formal one-parameter deformation for $A$.
  \item  $\nu_t=\sum_{i=0}^{\infty}\nu_it^i$ is an  equivariant formal one-parameter deformations for  $B$.
  \item  $\phi_t:A_t\to B_t$  is a G-equivariant associative  algebra morphism of the form  $\phi_t=\sum_{i=0}^{\infty}\phi_it^i$, where $\phi_i:A\to B$ is    G-equivariant associative algebra morphisms for all $i\ge 0$ and $\phi_0=\phi$.
\end{enumerate}

Therefore a triple $(\mu_t, \nu_t, \phi_t)$, as given above, is an equivariant deformation of $\phi$ provided following properties are satisfied.
\begin{itemize}
  \item[(i)] $\mu_t(\mu_t(a,b),c)=\mu_t(a,\mu_t(b,c))$, for all $a,b,c\in A;$
  \item[(ii)]  $\mu_i(ga,gb)=g\mu_i(a,b)$, for all $a,b\in A$ and $g\in G;$
  \item[(iii)] $\nu_t(\nu_t(a,b),c)=\nu_t(a,\nu_t(b,c))$, for all $a,b,c\in B;$
  \item[(iv)]  $\nu_i(ga,gb)=g\nu_i(a,b)$, for all $a,b\in B$ and $g\in G;$
  \item[(v)] $\phi_t(\mu_t(a,b))=\nu_t(\phi_t(a),\phi_t(b)), $  for all $a,b\in A;$
  \item[(vi)] $\phi_i(ga)=g\phi_i(a), $ for all $a\in A$ and $g\in G.$
\end{itemize}
The conditions $(i)$, $(iii)$ and $(v)$ are equivalent to following conditions respectively.
\begin{equation}\label{rbeqn1}
  \sum_{i+j=r}\mu_i(\mu_j(a,b),c)=\sum_{i+j=r}\mu_i(a,\mu_j(b,c)), \;\text{for all} a,b,c\in A,\; r\ge 0.
  \end{equation}
  \begin{equation}\label{rbeqn2}
    \sum_{i+j=r}\nu_i(\nu_j(a,b),c)=\sum_{i+j=r}\nu_i(a,\nu_j(b,c)), \;\text{for all} a,b,c\in B, \;r\ge 0.
   \end{equation}
   \begin{equation}\label{rbeqn3}
      \sum_{i+j=r}\phi_i(\mu_j(a,b))=\sum_{i+j+k=r}\nu_i(\phi_j(a),\phi_k(b)); \;\text{for all} a,b\in A,\;  r\ge 0.
\end{equation}

\end{defn}
 Now we define equivariant deformations  of finite order.

 \begin{defn}\label{rb3}
Let A and B be associative k-algebras with actions $(G,A)$ and $(G,B)$ respectively. An equivariant deformation of order n  of a G-equivariant associative algebra morphism $\phi:A\to B$ is a triple $(\mu_t, \nu_t, \phi_t)$, in which:

\begin{enumerate}
  \item  $\mu_t=\sum_{i=0}^{n}\mu_i t^i$ is an  equivariant formal one-parameter deformation of order n for $A$.
  \item  $\nu_t=\sum_{i=0}^{n}\nu_it^i$ is an  equivariant formal one-parameter deformations of order n for  $B$.
  \item  $\phi_t:A_t\to B_t$  is a G-equivariant associative  algebra morphism of the form  $\phi_t=\sum_{i=0}^{n}\phi_it^i$, where $\phi_i:A\to B$ is    G-equivariant associative algebra morphisms for all $i\ge 0$ and $\phi_0=\phi$.
\end{enumerate}

\end{defn}
\begin{rem}\label{rbrem1}
  \begin{itemize}
    \item For $r=0$, conditions \ref{rbeqn1}, \ref{rbeqn2} and \ref{rbeqn3} are equivalent to the fact that A and B are associative algebras and $\phi$ is an associative algebra morphism, respectively.
    \item For $r=1$, \ref{rbeqn1}, \ref{rbeqn2} and \ref{rbeqn3} are equivalent to $\delta^2\mu_1=0,$ $\delta^2\nu_1=0$ and $\phi\mu_1-\nu_1\phi-\delta^1\phi_1=0$.
         %From (ii), (iv) and (vi), we have $\mu_1$, $\nu_1$ and $\phi$ are G-equivariant.
          Thus for $r=1$,  \ref{rbeqn1}, \ref{rbeqn2} and \ref{rbeqn3} are equivalent to saying that $(\mu_1,\nu_1,\phi_1)\in C^2_G(\phi,\phi)$ is a cocycle. In general, for $r\ge 2$, $(\mu_r,\nu_r,\phi_r)$ is just a 2-cochain in $C^2_G(\phi,\phi).$
  \end{itemize}
\end{rem}
\begin{defn}
  The 2-cochain  $(\mu_1,\nu_1,\phi_1)$ in $C^2_G(\phi,\phi)$ is called infinitesimal of the equivariant  deformation $(\mu_t,\nu_t,\phi_t)$. In general, if $(\mu_i,\nu_i,\phi_i)=0,$ for $1\le i\le n-1$, and $(\mu_n,\nu_n,\phi_n)$ is a nonzero cochain in  $C^2_G(\phi,\phi)$, then $(\mu_n,\nu_n,\phi_n)$ is called n-infinitesimal of the deformation $(\mu_t,\nu_t,\phi_t)$.
\end{defn}
\begin{prop}
  The infinitesimal   $(\mu_1,\nu_1,\phi_1)$ of the equivariant deformation  $(\mu_t,\nu_t,\phi_t)$ is a 2-cocycle in $C^2_G(\phi,\phi).$ In general, n-infinitesimal  $(\mu_n,\nu_n,\phi_n)$ is a cocycle in $C^2_G(\phi,\phi).$
\end{prop}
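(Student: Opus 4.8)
The plan is to read the cocycle condition off the deformation equations \ref{rbeqn1}, \ref{rbeqn2} and \ref{rbeqn3}, specialised to $r=n$. Recall that the differential of the deformation complex of $\phi$ is $d^2(u,v,w)=(\delta^2u,\ \delta^2v,\ \phi u-v\phi-\delta^1w)$ (here the bimodule map $T$ is $\phi$ itself). Thus I must show that the three components of $d^2(\mu_n,\nu_n,\phi_n)$ all vanish, and that $(\mu_n,\nu_n,\phi_n)$ is an invariant $2$-cochain. The invariance is immediate: it is conditions (ii) and (iv) of Definition \ref{rb2} together with the $G$-equivariance of each $\phi_i$, so $(\mu_n,\nu_n,\phi_n)\in C^2_G(\phi,\phi)$ at the cochain level.

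For $n=1$ there is nothing new to do: the second bullet of Remark \ref{rbrem1} already records that \ref{rbeqn1}, \ref{rbeqn2}, \ref{rbeqn3} at $r=1$ are equivalent to $\delta^2\mu_1=0$, $\delta^2\nu_1=0$ and $\phi\mu_1-\nu_1\phi-\delta^1\phi_1=0$, i.e. to $d^2(\mu_1,\nu_1,\phi_1)=0$. For the general $n$-infinitesimal I would use the hypothesis $(\mu_i,\nu_i,\phi_i)=0$ for $1\le i\le n-1$ to collapse the convolution sums. In \ref{rbeqn1} at $r=n$, a summand $\mu_i(\mu_j(a,b),c)$ with $i+j=n$ survives only when $i=0$ or $j=0$, so only the pairs $(n,0)$ and $(0,n)$ contribute; recalling $\mu_0(a,b)=ab$, the equation becomes $\mu_n(a,b)c+\mu_n(ab,c)=a\,\mu_n(b,c)+\mu_n(a,bc)$, which is precisely $\delta^2\mu_n(a,b,c)=0$. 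The same argument applied to \ref{rbeqn2} gives $\delta^2\nu_n=0$.

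The step I expect to be the main obstacle is \ref{rbeqn3} at $r=n$, because of the triple convolution on the right and the two separate vanishing conditions ($\phi_i=0$ and $\mu_j=0$ on the left; $\phi_j=\phi_k=0$ and $\nu_i=0$ on the right). After discarding every vanishing term (and using $\phi_0=\phi$, $\mu_0$, $\nu_0$ the original structures), the left side reduces to $\phi_n(ab)+\phi(\mu_n(a,b))$ and the right side to $\nu_n(\phi a,\phi b)+\phi_n(a)\phi(b)+\phi(a)\phi_n(b)$. Recalling that $B$ is viewed as an $A$-bimodule through $\phi$, so that $\delta^1\phi_n(a,b)=\phi(a)\phi_n(b)-\phi_n(ab)+\phi_n(a)\phi(b)$, the rearranged identity is exactly $\phi\mu_n-\nu_n\phi-\delta^1\phi_n=0$. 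Combining this with $\delta^2\mu_n=0$ and $\delta^2\nu_n=0$ yields $d^2(\mu_n,\nu_n,\phi_n)=0$, so $(\mu_n,\nu_n,\phi_n)$ is a $2$-cocycle in $C^2_G(\phi,\phi)$, as asserted; the infinitesimal case is the specialisation $n=1$.
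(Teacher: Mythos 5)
Your proposal is correct and follows the same route as the paper: the $n=1$ case is read off from Remark \ref{rbrem1}, and the general case collapses the convolution sums in \ref{rbeqn1}--\ref{rbeqn3} at $r=n$ using the vanishing of the lower-order terms, recovering exactly $\delta^2\mu_n=0$, $\delta^2\nu_n=0$ and $\phi\mu_n-\nu_n\phi-\delta^1\phi_n=0$. The paper merely asserts that the $n>1$ case is "similar"; your write-up supplies precisely the details it omits, and they check out.
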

\begin{proof}
  For n=1, proof is obvious from the Remark \ref{rbrem1}. For $n>1$, proof is similar.
\end{proof}
We can write equations \ref{rbeqn1}, \ref{rbeqn2} and \ref{rbeqn3} for $r=n+1$ using the definition of coboundary $\delta$ as
\begin{equation}\label{rbeqn4}
  \delta^2\mu_{n+1}(a,b,c)=\sum_{\substack{ i+j=n+1\\i,j>0}}\mu_i(\mu_j(a,b),c)-\mu_i(a,\mu_j(b,c)), \;\text{for all} a,b,c\in A.
  \end{equation}
  \begin{equation}\label{rbeqn5}
    \delta^2\nu_{n+1}(a,b,c)=\sum_{\substack{ i+j=n+1\\i,j>0}}\nu_i(\nu_j(a,b),c)-\nu_i(a,\nu_j(b,c)), \;\text{for all} a,b,c\in B.
   \end{equation}
   \begin{eqnarray}\label{rbeqn6}
   % \nonumber % Remove numbering (before each equation)
     \phi(\mu_{n+1}(a,b))&-&\nu_{n+1}(\phi(a),\phi(b))-\delta^1\phi_{n+1}(a,b) \notag\\
      &=& \sum^{\prime}\nu_i(\phi_j(a),\phi_k(b)) -\sum_{\substack{ i+j=n+1\\i,j>0}}\phi_i(\mu_j(a,b)) ,
   \end{eqnarray}
 for all  $a,b\in A$,  where,
  \begin{eqnarray}
         % \nonumber % Remove numbering (before each equation)
           \sum^{\prime} &=& \sum_{\substack{ i+j=n+1\\i,j>0\\k=0}}+\sum_{\substack{ j+k=n+1\\j,k>0\\i=0}}  +\sum_{\substack{ k+i=n+1\\k,i>0\\j=0}}+\sum_{\substack{ i+j+k=n+1\\i,j,k>0}}.
         \end{eqnarray}
By using   equations \ref{rbeqn4}, \ref{rbeqn5} and \ref{rbeqn6}   we have
%\begin{eqnarray}
% \nonumber % Remove numbering (before each equation)
\begin{align}
&d^2(\mu_{n+1},\nu_{n+1},\phi_{n+1})(a,b,c,x,y,z,p,q) \notag\\
&= ( \sum_{\substack{ i+j=n+1\\i,j>0}}\mu_i(\mu_j(a,b),c)-\mu_i(a,\mu_j(b,c)),
    \sum_{\substack{ i+j=n+1\\i,j>0}}\nu_i(\nu_j(x,y),z)-\nu_i(x,\nu_j(y,z)),\notag\\
   & \hspace{3cm}\sum^{\prime}\nu_i(\phi_j(p),\phi_k(q))-\sum_{\substack{ i+j=n+1\\i,j>0}}\phi_i(\mu_j(p,q))),
   \end{align}
%\end{eqnarray}
for all $a,b,c,p,q\in A$ and  $x,y,z\in B$.\\
Define a 3-cochain $F_{n+1}$ by
\begin{align}
% \nonumber % Remove numbering (before each equation)
&F_{n+1}(a,b,c,x,y,z,p,q)\notag\\
&= ( \sum_{\substack{ i+j=n+1\\i,j>0}}\mu_i(\mu_j(a,b),c)-\mu_i(a,\mu_j(b,c)),
\sum_{\substack{ i+j=n+1\\i,j>0}}\nu_i(\nu_j(x,y),z)-\nu_i(x,\nu_j(y,z)),\notag\\
   & \hspace{2cm} \sum^{\prime}\nu_i(\phi_j(p),\phi_k(q))-\sum_{\substack{ i+j=n+1\\i,j>0}}\phi_i(\mu_j(p,q)))
\end{align}

\begin{lem}
  The 3-cochain $F_{n+1}$ is invariant, that is $F_{n+1}\in C_G^3(\phi,\phi).$
\end{lem}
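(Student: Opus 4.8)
The plan is to show that each of the three components of $F_{n+1}$ is an invariant cochain in the appropriate complex $C_G^{\ast}(A;A)$, $C_G^{\ast}(B;B)$ or $C_G^{\ast}(A;B)$, from which membership in $C_G^3(\phi,\phi)=C_G^3(A;A)\oplus C_G^3(B;B)\oplus C_G^2(A;B)$ follows immediately by the definition of the direct sum complex. Since invariance is checked componentwise, I would treat the three entries separately but by the same mechanism: each entry is built out of sums of composites of the maps $\mu_i$, $\nu_i$, $\phi_i$, every one of which is $G$-equivariant by the defining conditions (ii), (iv), (vi) of an equivariant deformation (Definition \ref{rb2}).

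First I would fix $g\in G$ and verify, for the first component, that
\[
\sum_{\substack{i+j=n+1\\ i,j>0}}\bigl[\mu_i(\mu_j(ga,gb),gc)-\mu_i(ga,\mu_j(gb,gc))\bigr]
= g\sum_{\substack{i+j=n+1\\ i,j>0}}\bigl[\mu_i(\mu_j(a,b),c)-\mu_i(a,\mu_j(b,c))\bigr].
\]
This is a direct consequence of applying $\mu_j(g-,g-)=g\mu_j(-,-)$ on the inside and then $\mu_i(g-,g-)=g\mu_i(-,-)$ on the outside, since $g$ acts linearly (condition (3) of the $(G,A)$-action) and so commutes with the finite sum and the subtraction. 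The second component is handled identically using conditions (iv) and the $(G,B)$-action. For the third component one uses, term by term in $\sum'$, that $\nu_i(\phi_j(gp),\phi_k(gq))=\nu_i(g\phi_j(p),g\phi_k(q))=g\,\nu_i(\phi_j(p),\phi_k(q))$ (combining (vi) with (iv)), and similarly $\phi_i(\mu_j(gp,gq))=\phi_i(g\mu_j(p,q))=g\,\phi_i(\mu_j(p,q))$ (combining (ii) with (vi)); again linearity of the $G$-action lets us pull $g$ out of the whole expression.

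The only mild subtlety — and the step I would present most carefully — is bookkeeping: confirming that \emph{every} index pair or triple occurring in the sums (including the three ``boundary'' families in $\sum'$ where one of $i,j,k$ vanishes, which involve $\phi_0=\phi$ and $\nu_0$ or $\mu_0$, i.e.\ the original structure maps) consists of equivariant maps. This is fine because $\phi$ is $G$-equivariant by hypothesis and $\mu_0,\nu_0$ are the original equivariant multiplications from the actions $(G,A)$, $(G,B)$; so the $i=0$, $j=0$, or $k=0$ terms are equivariant for the same reason as the positive-index terms. Once that is in hand, collecting the three componentwise identities gives $F_{n+1}(g\,\cdot)=g\,F_{n+1}(\cdot)$, i.e.\ $F_{n+1}\in C_G^3(\phi,\phi)$, which is the claim.
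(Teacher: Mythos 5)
Your proposal is correct and follows essentially the same route as the paper: a componentwise verification that $F_{n+1}(g\,\cdot)=g\,F_{n+1}(\cdot)$, obtained by pushing $g$ through each composite using the equivariance of $\mu_i$, $\nu_i$, $\phi_i$ from Definition \ref{rb2} and the linearity of the action. Your extra remark that the index-zero terms ($\mu_0$, $\nu_0$, $\phi_0=\phi$) are also equivariant is a point the paper leaves implicit, but it does not change the argument.
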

\begin{proof}
 To prove that $F_{n+1}$ is invariant we show that  $$F_{n+1}(ga,gb,gc,gx,gy,gz,gp,gq)=gF_{n+1}(a,b,c,x,y,z,p,q), $$  for all $a,b,c,p,q\in A$  and $x,y,z\in B.$ From definition \ref{rb2}, we have $$\mu_i(ga,gb)=g\mu_i(a,b),\;\;\nu_i(gx,gy)=g\nu_i(x,y),\;\; \phi_i(ga)=g\phi_i(a),$$ for all $a,b\in A$ and $x,y\in B.$
 So, we have, for all $a,b,c,p,q\in A$ and $x,y,z\in B,$
 {\scriptsize \begin{align*}
 % \nonumber % Remove numbering (before each equation)
   &F_{n+1}(ga,gb,gc,gx,gy,gz,gp,gq)\\
   &= ( \sum_{\substack{ i+j=n+1\\i,j>0}}\mu_i(\mu_j(ga,gb),gc)-\mu_i(ga,\mu_j(gb,gc)),
    \sum_{\substack{ i+j=n+1\\i,j>0}}\nu_i(\nu_j(gx,gy),gz)-\nu_i(gx,\nu_j(gy,gz)),\\
   & \hspace{2cm} \sum^{\prime}\nu_i(\phi_j(gp),\phi_k(gq))-\sum_{\substack{ i+j=n+1\\i,j>0}}\phi_i(\mu_j(gp,gq)))\\
   &= ( \sum_{\substack{ i+j=n+1\\i,j>0}}\mu_i(g\mu_j(a,b),gc)-\mu_i(ga,g\mu_j(b,c)),
    \sum_{\substack{ i+j=n+1\\i,j>0}}\nu_i(g\nu_j(x,y),gz)-\nu_i(gx,g\nu_j(y,z)),\\
   &\hspace{2cm} \sum^{\prime}\nu_i(g\phi_j(p),g\phi_k(q))-\sum_{\substack{ i+j=n+1\\i,j>0}}\phi_i(g\mu_j(p,q)))\\
   &= ( \sum_{\substack{ i+j=n+1\\i,j>0}}g\mu_i(\mu_j(a,b),c)-g\mu_i(a,\mu_j(b,c)),
    \sum_{\substack{ i+j=n+1\\i,j>0}}g\nu_i(\nu_j(x,y),z)-g\nu_i(x,\nu_j(y,z)),\\
   & \hspace{2cm} \sum^{\prime}g\nu_i(\phi_j(p),\phi_k(q))-\sum_{\substack{ i+j=n+1\\i,j>0}}g\phi_i(\mu_j(p,q)))\\
   &=gF_{n+1}(a,b,c,x,y,z,p,q).
 \end{align*}}
 So we conclude that $F_{n+1}\in C_G^n(\phi,\phi).$
\end{proof}
\begin{defn}
  The 3-cochain $F_{n+1}\in C_G^n(\phi,\phi)$ is called $(n+1)th$ obstruction cochain for extending the given equivariant deformation of order n to an equivariant deformation of $\phi$ of order $(n+1)$. Now onwards we denote $F_{n+1}$ by $Ob_{n+1}(\phi_t)$
\end{defn}
We have the following result.
\begin{thm}
  The (n+1)th obstruction cochain $Ob_{n+1}(\phi_t)$ is a 3-cocycle.
\end{thm}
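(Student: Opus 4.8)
The plan is to imitate the classical Gerstenhaber--Schack argument that obstructions are cocycles, but carried out component-by-component in the complex $C_G^{\ast}(\phi,\phi) = C_G^{\ast}(A;A)\oplus C_G^{\ast}(B;B)\oplus C_G^{\ast}(A;B)$, keeping track of the extra ``mixed'' coordinate. Write $Ob_{n+1}(\phi_t) = (F^A_{n+1}, F^B_{n+1}, F^{\phi}_{n+1})$, where $F^A_{n+1}(a,b,c) = \sum_{i+j=n+1,\,i,j>0}\big(\mu_i(\mu_j(a,b),c)-\mu_i(a,\mu_j(b,c))\big)$, similarly $F^B_{n+1}$ with the $\nu_i$'s, and $F^{\phi}_{n+1}(p,q) = \sum'\nu_i(\phi_j(p),\phi_k(q)) - \sum_{i+j=n+1,\,i,j>0}\phi_i(\mu_j(p,q))$. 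By the definition of the differential $d^3$ on $C_G^3(\phi,\phi)$, showing $d^3 Ob_{n+1}(\phi_t)=0$ amounts to three separate identities: $\delta^3 F^A_{n+1}=0$, $\delta^3 F^B_{n+1}=0$, and $\phi\circ F^A_{n+1} - F^B_{n+1}\circ\phi - \delta^2 F^{\phi}_{n+1} = 0$.

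First I would dispose of the first two identities. These are exactly the statements, for the one-parameter deformations $\mu_t$ of $A$ and $\nu_t$ of $B$, that the $(n+1)$st obstruction to deforming an associative algebra is a Hochschild $3$-cocycle; this is the Gerstenhaber computation already used in \cite{gmrb} for the equivariant algebra case, so I would either cite it or reproduce it briefly. The mechanism is the standard one: expand $\delta^3 F^A_{n+1}$ using the definition of $\delta^3$, substitute the expressions $\mu_i(\mu_j(a,b),c)-\mu_i(a,\mu_j(b,c))$, and observe that the associativity obstruction is, up to sign, the Gerstenhaber bracket square $\tfrac12[\mu_{\le n},\mu_{\le n}]$ of the truncated multiplication with itself; the graded Jacobi identity then forces its coboundary to vanish modulo terms that are themselves obstructions of lower order, all of which vanish because $(\mu_t,\nu_t,\phi_t)$ is a genuine deformation of order $n$. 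The equivariance throughout is automatic since each $\mu_i$, $\nu_i$ lies in $\mathrm{Hom}_k^G$, and Lemma \ref{rb1} guarantees that the $\delta$'s preserve invariance, so the whole computation stays inside $C_G^{\ast}$.

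The substantive part is the third identity, $\phi F^A_{n+1} - F^B_{n+1}\phi = \delta^2 F^{\phi}_{n+1}$, which is where the morphism data interacts with the two multiplications. Here I would proceed as follows. Expand $\delta^2 F^{\phi}_{n+1}(a,b,c)$ using the Hochschild differential of $C_G^{\ast}(A;B)$ (with $B$ an $A$-bimodule via $\phi$); this produces terms of the shape $\nu_0(\phi(a), F^{\phi}_{n+1}(b,c))$, $F^{\phi}_{n+1}(\mu_0(a,b),c)$, etc. Into each of these I substitute the definition of $F^{\phi}_{n+1}$, namely the relation \eqref{rbeqn6} rewritten as $\phi\mu_{n+1} - \nu_{n+1}(\phi,\phi) - \delta^1\phi_{n+1} = -F^{\phi}_{n+1}$ at order $n+1$, together with the lower-order instances of \eqref{rbeqn1}, \eqref{rbeqn2}, \eqref{rbeqn3} which hold by hypothesis. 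The bookkeeping is a triple sum over $i+j+k$ (or $i+j$) equal to $n+1$ with various indices allowed to be zero, and the goal is to show that after collecting terms the right-hand side equals $\phi F^A_{n+1} - F^B_{n+1}\phi$. This is the classical ``$\phi$ is a morphism $\Rightarrow$ its obstruction is controlled'' computation of Gerstenhaber--Schack, and the main obstacle is purely organizational: correctly partitioning the index set so that the associativity-obstruction terms on the $A$-side and $B$-side separate out cleanly from the cancelling cross-terms. I expect it is cleanest to introduce the truncated objects $\bar\mu_t = \sum_{i\le n}\mu_i t^i$, $\bar\nu_t$, $\bar\phi_t$, note that $\phi F^A_{n+1}$, $F^B_{n+1}\phi$, $\delta^2 F^{\phi}_{n+1}$ are precisely the coefficients of $t^{n+1}$ in $\bar\phi_t(\text{assoc. obstr. of }\bar\mu_t)$, $(\text{assoc. obstr. of }\bar\nu_t)(\bar\phi_t,\bar\phi_t)$, and a suitable combination built from $\bar\phi_t\bar\mu_t - \bar\nu_t(\bar\phi_t,\bar\phi_t)$, and then to verify the identity at the level of formal power series, reading off the $t^{n+1}$ coefficient at the end. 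Once the three identities are in hand, $d^3 Ob_{n+1}(\phi_t)=0$ follows, and since $Ob_{n+1}(\phi_t)\in C_G^3(\phi,\phi)$ by the preceding lemma, it is a cocycle in the equivariant deformation complex, as claimed.
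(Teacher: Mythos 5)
Your plan is essentially the paper's proof: the same three-component decomposition of $d^3\,Ob_{n+1}(\phi_t)$, the same appeal to \cite{gmrb} for $\delta^3 O_1=\delta^3 O_2=0$, and the same strategy for the third identity $\phi O_1-O_2\phi-\delta^2 O_3=0$ (expand $\delta^2 O_3$, substitute the order-$\le n$ instances of \ref{rbeqn1}--\ref{rbeqn3}, and cancel until only terms annihilated by the lower-order associativity relations for $\nu$ remain), which the paper executes by explicit index bookkeeping where you leave it as a sketch. Your generating-function repackaging via truncated $\bar\mu_t,\bar\nu_t,\bar\phi_t$ is a reasonable way to organize that same cancellation rather than a genuinely different argument.
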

\begin{proof}
  We have,
  $$d^3 Ob_{n+1}=(\delta^3(O_1),\delta^3(O_2),\phi O_1-O_2\phi-\delta^2(O_3)),$$
  where $O_1$, $O_2$ and $O_3$ are given by  $$O_1(a,b,c)= \sum_{\substack{ i+j=n+1\\i,j>0}}\left\{\mu_i(\mu_j(a,b),c)-\mu_i(a,\mu_j(b,c))\right\},$$
  $$O_2(x,y,z)= \sum_{\substack{ i+j=n+1\\i,j>0}}\left \{\nu_i(\nu_j(x,y),z)-\nu_i(x,\nu_j(y,z))\right\},$$
  $$O_3(p,q)= \sum^{\prime}\nu_i(\phi_j(p),\phi_k(q)) -\sum_{\substack{ i+j=n+1\\i,j>0}}\phi_i(\mu_j(p,q)).$$
  From \cite{gmrb}, we have $\delta^3(O_1)=0$,  $\delta^3(O_2)=0$. So, to prove that $d^3Ob_{n+1}=0$, it remains to show that $\phi O_1-O_2\phi-\delta^2(O_3)=0.$
  To prove that $\phi O_1-O_2\phi-\delta^2(O_3)=0$ we use similar ideas as have been used in \cite{AM} and \cite{DY}.
  We have,
  \begin{align}\label{rbeqn7}
  % \nonumber % Remove numbering (before each equation)
   (\phi O_1 -O_2\phi )(x,y,z)&=\sum_{\substack{ i+j=n+1\\i,j>0}}\phi \mu_i(\mu_j(x,y),z)\notag -\sum_{\substack{ i+j=n+1\\i,j>0}}\phi \mu_i(x,\mu_j(y,z))\nonumber\\
    & -\sum_{\substack{ i+j=n+1\\i,j>0}}\nu_i(\nu_j(\phi x,\phi y),\phi z) +\sum_{\substack{ i+j=n+1\\i,j>0}}\nu_i(\phi x,\nu_j(\phi y,\phi z))
 \end{align}
  and \begin{eqnarray}\label{rbeqn8}
      % \nonumber % Remove numbering (before each equation)
        \delta^2(O_3)(x,y,z) &=& \sum^{\prime}\nu_0(\phi (x),\nu_i(\phi_j(y),\phi_k(z))) -\sum_{\substack{ i+j=n+1\\i,j>0}}\nu_0(\phi(x),\phi_i(\mu_j(y,z))) \notag\\
         && -\sum^{\prime}\nu_i(\phi_j(\mu_0(x,y)),\phi_k(z)) +\sum_{\substack{ i+j=n+1\\i,j>0}}\phi_i(\mu_j(\mu_0(x,y),z)) \notag\\
         && +\sum^{\prime}\nu_i(\phi_j(x),\phi_k(\mu_0(y,z))) -\sum_{\substack{ i+j=n+1\\i,j>0}}\phi_i(\mu_j(x,\mu_0(y,z))) \notag\\
         &&-\sum^{\prime}\nu_0(\nu_i(\phi_j(x),\phi_k(y)),\phi(z))\notag \\
         && +\sum_{\substack{ i+j=n+1\\i,j>0}}\nu_0(\phi_i(\mu_j(x,y)),\phi(z))
\end{eqnarray}
From \ref{rbeqn3}, we have
\begin{eqnarray}\label{rbeqn9}
% \nonumber % Remove numbering (before each equation)
   \phi_j\mu_0(x,y)&=& \sum_{\substack{ \alpha+\beta+\gamma=j\\\alpha,\beta,\gamma\ge 0}}\nu_{\alpha}(\phi_{\beta}(x),\phi_{\gamma}(y))-\sum_{\substack{ p+q=j\\1\le q\le j}}\phi_p\mu_q(x,y)
\end{eqnarray}
Substituting expression for $\phi_j\mu_0$ from \ref{rbeqn9}, in the third sum on the right hand side of \ref{rbeqn8} we can rewrite it as
\begin{eqnarray}\label{rbeqn10}
% \nonumber % Remove numbering (before each equation)
  -\sum^{\prime}\nu_i(\phi_j(\mu_0(x,y)),\phi_k(z))   &=&-\sum^{\prime}_{\substack{ \alpha+\beta+\gamma=j\\\alpha,\beta,\gamma\ge 0}}\nu_i(\nu_{\alpha}(\phi_{\beta}(x),\phi_{\gamma}(y)),\phi_k(z))\nonumber\\
  &&+\sum^{\prime}_{\substack{ p+q=j\\1\le q\le j}}\nu_i(\phi_p\mu_q(x,y),\phi_k(z))
\end{eqnarray}
Here the first sum of \ref{rbeqn10} is given by
\begin{eqnarray}\label{rbeqn11}
         % \nonumber % Remove numbering (before each equation)
           \sum^{\prime}_{\substack{ \alpha+\beta+\gamma=j\\\alpha,\beta,\gamma\ge 0}} &=& \sum_{\substack{ i+\alpha+\beta+\gamma=n+1\\i,(\alpha+\beta+\gamma)>0\\k=0}}+\sum_{\substack{ \alpha+\beta+\gamma+k=n+1\\(\alpha+\beta+\gamma),k>0\\i=0}}  +\sum_{\substack{ k+i=n+1\\k,i>0\\\alpha=\beta=\gamma=0}}+\sum_{\substack{ i+\alpha+\beta+\gamma+k=n+1\\i,(\alpha+\beta+\gamma),k>0}},
         \end{eqnarray}
  the second sum   of \ref{rbeqn10} is given by
 \begin{eqnarray}\label{rbeqn12}
 % \nonumber % Remove numbering (before each equation)
    \sum^{\prime}_{\substack{ p+q=j\\1\le q\le j}} &=& \sum_{\substack{ i+p+q=n+1\\i,q>0,p\ge 0\\k=0}}+\sum_{\substack{ p+q+k=n+1\\q,k>0, p\ge 0\\i=0}} +\sum_{\substack{ i+j+k=n+1\\i,q,k>0, p\ge 0}}
    \end{eqnarray}
         The first sum of \ref{rbeqn9} splits into four sums. The  first one of these four sums splits  as
         \begin{align}\label{rbeqn13}
         % \nonumber % Remove numbering (before each equation)
         & - \sum_{\substack{ i+\alpha+\beta+\gamma=n+1\\i,(\alpha+\beta+\gamma)>0\\k=0}}\nu_i(\nu_{\alpha}(\phi_{\beta}(x),\phi_{\gamma}(y)),\phi_k(z)) \nonumber\\
          &= -\sum_{\substack{i+\alpha=n+1\\i,\alpha>0}}\nu_i(\nu_{\alpha}(\phi(x),\phi(y)),\phi(z)))
             -\sum_{\substack{i+\alpha+\beta+\gamma=n+1\\i,(\beta+\gamma)>0\\\alpha,\beta,\gamma\ge 0}}\nu_i(\nu_{\alpha}(\phi_{\beta}(x),\phi_{\gamma}(y)),\phi(z)).
         \end{align}
         The fist sum on the r.h.s. of \ref{rbeqn13} appears as third sum on the  r.h.s. of \ref{rbeqn7}.
         By applying a similar arguement to the fifth sum on the r.h.s. of \ref{rbeqn8}, using \ref{rbeqn3} on $\phi_k\mu_0(y,z)$, one can rewrite it as
         \begin{eqnarray}\label{rbeqn14}
% \nonumber % Remove numbering (before each equation)
  \sum^{\prime}\nu_i(\phi_j(x),\phi_k\mu_0(y,z))   &=&\sum^{\prime}_{\substack{ \alpha+\beta+\gamma=j\\\alpha,\beta,\gamma\ge 0}}\nu_i(\phi_j(x),\nu_{\alpha}(\phi_{\beta}(y),\phi_{\gamma}(z)))  \nonumber\\
  &&-\sum^{\prime}_{\substack{ p+q=k\\1\le q\le j}}\nu_i(\phi_j(x),\phi_p\mu_q(y,z))
\end{eqnarray}
As above first sum on r.h.s. of  \ref{rbeqn14} is a sum of  four sums, similar to \ref{rbeqn11} except that the  roles of j and k are interchanged. One of these four terms splits as
\begin{align}\label{rbeqn15}
% \nonumber % Remove numbering (before each equation)
& \sum_{\substack{ i+\alpha+\beta+\gamma=n+1\\i,(\alpha+\beta+\gamma)>0\\k=0}}\nu_i(\phi_j(x),\nu_{\alpha}(\phi_{\beta}(y),\phi_{\gamma}(z))) \nonumber\\
&= \sum_{\substack{i+\alpha=n+1\\i,\alpha>0}}\nu_i(\phi(x),\nu_{\alpha}(\phi(y),\phi(z)))
 +\sum_{\substack{i+\alpha+\beta+\gamma=n+1\\i,(\beta+\gamma)>0\\\alpha,\beta,\gamma\ge 0}}\nu_i(\phi(x),\nu_{\alpha}(\phi_{\beta}(y),\phi_{\gamma}(z))).
\end{align}
The fist sum on the r.h.s. of \ref{rbeqn15} appears as fourth sum on the  r.h.s. of \ref{rbeqn7}.
In the fourth sum on the r.h.s. of \ref{rbeqn8}, we use \ref{rbeqn1} to substitute $\mu_j(\mu_0(x,y),z)$ to obtain
\begin{eqnarray}\label{rbeqn16}
% \nonumber % Remove numbering (before each equation)
  \sum_{\substack{i+j=n+1\\i,j>0}}\phi_i\mu_j(\mu_0(x,y),z) &=&\sum_{\substack{i+j=n+1\\i,j>0}}\phi_i\mu_j(x,\mu_0(y,z)+\sum_{\substack{i+j+k=n+1\\i,k>0, j\ge 0}}\phi_i\mu_j(x,\mu_k(y, z))\nonumber\\
  &&-\sum_{\substack{i+j+k=n+1\\i,k>0, j\ge 0}}\phi_i\mu_j(\mu_k(x, y), z).
\end{eqnarray}
Fist sum on the r.h.s. of \ref{rbeqn16} cancels with the sixth sum on the r.h.s. of \ref{rbeqn8}.
Second sum on the r.h.s. 0f \ref{rbeqn16} splits as
\begin{eqnarray}\label{rbeqn17}
% \nonumber % Remove numbering (before each equation)
  \sum_{\substack{i+j+k=n+1\\i,k>0, j\ge 0}}\phi_i\mu_j(x,\mu_k(y, z)) &=& \sum_{k=1}^n\sum_{\substack{i+j+k=n+1\\i,j\ge 0}}\phi_i\mu_j(x,\mu_k(y,z) \nonumber\\
   && -\phi\sum_{\substack{j+k=n+1\\j,k>0}}\mu_j(x,mu_k(y,z))
\end{eqnarray}
Second sum on the r.h.s. of \ref{rbeqn17} appears as second sum on the r.h.s. of \ref{rbeqn7}.
Also, by using \ref{rbeqn3} first sum on the r.h.s. of \ref{rbeqn17} splits as

\begin{eqnarray}\label{rbeqn18}
% \nonumber % Remove numbering (before each equation)
  \sum_{k=1}^n\sum_{\substack{i+j+k=n+1\\i,j\ge 0}}\phi_i\mu_j(x,\mu_k(y,z)  &=& \sum_{k=1}^n\sum_{\substack{\alpha+\beta+\gamma+k=n+1\\\alpha,\beta,\gamma\ge 0}}\nu_{\alpha}(\phi_{\beta},\phi_{\gamma}\mu_k(y,z)) \nonumber \\
   &=& \sum_{\substack{i+j=n+1\\i,j> 0}}\nu_0(\phi(x), \phi_i\mu_j(y,z))\nonumber\\
   &&+\sum^{\prime}_{\substack{p+q=k\\i\le q\le k}}\nu_i(\phi_j(x), \phi_p\mu_q(y,z))
\end{eqnarray}
In the last line the two terms cancel with second terms on the r.h.s of  \ref{rbeqn8} and \ref{rbeqn14}, respectively.
The third term on the r.h.s. of \ref{rbeqn17} splits as
\begin{eqnarray}\label{rbeqn19}
% \nonumber % Remove numbering (before each equation)
 -\sum_{\substack{i+j+k=n+1\\i,k>0, j\ge 0}}\phi_i\mu_j(\mu_k(x, y), z)
 &=& -\sum_{\substack{i+j=n+1\\i,j>0}}\nu_0(\phi_i\mu_j(x,y), \phi (z))\nonumber\\
 &&- \sum^{\prime}_{\substack{p+q=j\\1\le q\le j}}\nu_i(\phi_p\mu_q(x,y), \phi_k (z))\nonumber\\
   & & +\sum_{\substack{j+k=n+1\\j,k>0}}\phi \mu_j(\mu_k(x,y), z).
\end{eqnarray}
On the r.h.s. of \ref{rbeqn19}, last term cancels with first sum on the r.h.s. of \ref{rbeqn7}, first sum cancels with the last sum on the r.h.s. of \ref{rbeqn8} and second term cancels with the second sum on the r.h.s. of \ref{rbeqn10}. From our previous arquements we have,
\begin{eqnarray}\label{rbeqn20}
% \nonumber % Remove numbering (before each equation)
  & &\phi O_1-O_2\phi-\delta^2(O_3)(x,y,z)\nonumber\\
   &=& \sum^{\prime}\left\{\nu_0(\phi(x),\nu_i(\phi_j(y),\phi_k(z)))-\nu_0(\nu_i(\phi_j(x),\phi_k(y),\phi(z)))\right\}\nonumber\\
  & &+\sum_{\substack{i+\alpha+\beta+\gamma=n+1\\i,(\beta+\gamma)>0\\\alpha, \beta,\gamma\ge 0}}\left\{\nu_i(\phi(x),\nu_{\alpha}(\phi_{\beta}(y),\phi_{\gamma}(z))-\nu_i(\nu_{\alpha}(\phi_{\beta}(x),\phi_{\gamma}(y), \phi(z))\right \}\nonumber\\
  & & -\left\{\sum_{\substack{ \alpha+\beta+\gamma+k=n+1\\(\alpha+\beta+\gamma),k>0\\i=0}}  +\sum_{\substack{ k+i=n+1\\k,i>0\\\alpha=\beta=\gamma=0}}+\sum_{\substack{ i+\alpha+\beta+\gamma+k=n+1\\i,(\alpha+\beta+\gamma),k>0}}\right\}\nu_i(\nu_{\alpha}(\phi_{\beta}(x),\phi_{\gamma}(y)),\phi_k(z))\nonumber\\
    & & + \left\{\sum_{\substack{ \alpha+\beta+\gamma+j=n+1\\(\alpha+\beta+\gamma),j>0\\i=0}}  +\sum_{\substack{ i+j=n+1\\i,j>0\\\alpha=\beta=\gamma=0}}+\sum_{\substack{ i+j+\alpha+\beta+\gamma=n+1\\i,j,(\alpha+\beta+\gamma)>0}}\right\}\nu_i(\phi_j(x),\nu_{\alpha}(\phi_{\beta}(y),\phi_{\gamma}(z)))\nonumber \\
    \end{eqnarray}
  We can write above equation more compactly as
  \begin{eqnarray}\label{rbeqn21}
  % \nonumber % Remove numbering (before each equation)
   & &\phi O_1-O_2\phi-\delta^2(O_3)(x,y,z)\nonumber\\
   &= &\overline{\sum}\left\{\nu_i(\phi_{\alpha}(x),\nu_j(\phi_{\beta}(y),\phi_{\gamma}(z))) -\nu_i(\nu_j(\phi_{\alpha}(x),\phi_{\beta}(y)),\phi_{\gamma}(z))\right\},
  \end{eqnarray}
  where
  \begin{eqnarray}
  % \nonumber % Remove numbering (before each equation)
    \overline{\sum} &=& \sum_{\substack{i+j+\alpha+\beta+\gamma=n+1\\1\le \alpha+\beta+\gamma \le n\\i,j,\alpha,\beta,\gamma\ge 0}} + \sum_{\substack{\alpha+\beta=n+1\\\alpha,\beta>0\\i,j,\gamma =0}}  + \sum_{\substack{\beta+\gamma=n+1\\\beta,\gamma>0\\i,j,\alpha=0}}\nonumber \\
     & &+ \sum_{\substack{\alpha+\gamma=n+1\\\alpha,\gamma>0\\i,j,\beta =0}} + \sum_{\substack{\alpha+\beta+\gamma=n+1\\\alpha,\beta,\gamma>0\\i,j=0}}.
  \end{eqnarray}
  It follows  from \ref{rbeqn21} and \ref{rbeqn2} that the sum on the r.h.s. of \ref{rbeqn21} is 0, and hence $\phi O_1-O_2\phi-\delta^2(O_3)=0$.
  This finishes the proof of the theorem.
\end{proof}
\begin{thm}
Let $(\mu_t,\nu_t,\phi_t)$ be an   equivariant deformation of $\phi$ of order n. Then $(\mu_t,\nu_t,\phi_t)$ extends to an equivariant deformation of order $n+1$ if and only if cohomology class of $(n+1)$th obstruction $Ob_{n+1}(\phi_t)$  vanishes.
\end{thm}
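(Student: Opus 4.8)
The plan is to reduce the extension problem to a single cohomological equation inside the invariant complex $C^{\ast}_G(\phi,\phi)$. First I would spell out what it means for the order-$n$ deformation $(\mu_t,\nu_t,\phi_t)$ to extend to order $n+1$: one must produce a triple $(\mu_{n+1},\nu_{n+1},\phi_{n+1})\in C^2_G(\phi,\phi)$ so that the deformation identities \ref{rbeqn1}, \ref{rbeqn2}, \ref{rbeqn3} hold at level $r=n+1$. The identities at levels $r\le n$ hold by hypothesis and involve only $\mu_0,\dots,\mu_n$, $\nu_0,\dots,\nu_n$, $\phi_0,\dots,\phi_n$, so they are untouched by the new top-degree terms. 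Rewriting the $r=n+1$ identities via the definition of $\delta$ yields exactly equations \ref{rbeqn4}, \ref{rbeqn5}, \ref{rbeqn6}. Since the cochain $F_{n+1}=Ob_{n+1}(\phi_t)$ is built only from the already-known terms $\mu_i,\nu_i,\phi_i$ with $i\le n$, the computation carried out just before the definition of $Ob_{n+1}(\phi_t)$ shows that the system \ref{rbeqn4}--\ref{rbeqn6} is equivalent to the single equation
\[
d^2(\mu_{n+1},\nu_{n+1},\phi_{n+1})=Ob_{n+1}(\phi_t)
\]
in $C^3_G(\phi,\phi)$, the third ($T=\phi$) slot $\phi\mu_{n+1}-\nu_{n+1}\phi-\delta^1\phi_{n+1}$ of $d^2$ corresponding to $O_3$.

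With this in hand the theorem is immediate. By the preceding theorem $Ob_{n+1}(\phi_t)$ is a $3$-cocycle in $C^{\ast}_G(\phi,\phi)$, hence determines a class $[Ob_{n+1}(\phi_t)]\in H^3_G(\phi,\phi)$. If the deformation extends to order $n+1$, the extending triple $(\mu_{n+1},\nu_{n+1},\phi_{n+1})$ lies in $C^2_G(\phi,\phi)$ (equivariance being built into Definition \ref{rb2}) and satisfies the displayed equation, so $Ob_{n+1}(\phi_t)=d^2(\mu_{n+1},\nu_{n+1},\phi_{n+1})$ is a coboundary and $[Ob_{n+1}(\phi_t)]=0$. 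Conversely, if $[Ob_{n+1}(\phi_t)]=0$ there is $(u,v,w)\in C^2_G(\phi,\phi)$ with $d^2(u,v,w)=Ob_{n+1}(\phi_t)$; putting $\mu_{n+1}:=u$, $\nu_{n+1}:=v$, $\phi_{n+1}:=w$ makes \ref{rbeqn4}--\ref{rbeqn6}, and hence the deformation identities at level $r=n+1$, hold, while the invariance conditions (ii), (iv), (vi) of Definition \ref{rb2} hold because $(u,v,w)$ lies in the invariant subcomplex. Thus $(\mu_t+\mu_{n+1}t^{n+1},\,\nu_t+\nu_{n+1}t^{n+1},\,\phi_t+\phi_{n+1}t^{n+1})$ is an equivariant deformation of $\phi$ of order $n+1$.

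The only genuinely substantive input is the previous theorem, which is what guarantees that $[Ob_{n+1}(\phi_t)]$ is well defined; everything else is formal. The slightly delicate bookkeeping point I expect to spend the most care on is verifying that the three equations \ref{rbeqn4}--\ref{rbeqn6} really do assemble into the one identity $d^2(\mu_{n+1},\nu_{n+1},\phi_{n+1})=Ob_{n+1}(\phi_t)$ with consistent signs, and that the bounding cochain can always be taken $G$-invariant; both are handled by the fact that the whole discussion stays inside the invariant subcomplex $C^{\ast}_G(\phi,\phi)$, so no step requires leaving it. I therefore do not anticipate a real obstacle beyond this cochain-level bookkeeping.
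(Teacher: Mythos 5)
Your proposal is correct and follows essentially the same route as the paper: both reduce the $r=n+1$ deformation identities to the single equation $d^2(\mu_{n+1},\nu_{n+1},\phi_{n+1})=Ob_{n+1}(\phi_t)$ in $C^3_G(\phi,\phi)$ and then read off both directions of the equivalence, using the cocycle theorem to make the obstruction class well defined. Your write-up merely makes explicit the bookkeeping (the passage from equations \ref{rbeqn1}--\ref{rbeqn3} to \ref{rbeqn4}--\ref{rbeqn6} and the invariance of the bounding cochain) that the paper compresses into ``observe that''.
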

\begin{proof}
  Suppose that an equivariant deformation $(\mu_t,\nu_t,\phi_t)$ of $\phi$ of order n extends to an equivariant deformation of order $n+1$. This implies that \ref{rbeqn1},\ref{rbeqn2} and \ref{rbeqn3} are satisfied for $r=n+1.$  Observe that this implies $Ob_{n+1}(\phi_t)=d_2(\mu_{n+1},\nu_{n+1},\phi_{n+1})$. So cohomology class of  $Ob_{n+1}(\phi_t)$ vanishes. Conversely, suppose that  cohomology class of  $Ob_{n+1}(\phi_t)$ vanishes, that is $Ob_{n+1}(\phi_t)$ is a coboundary. Let
  $$ Ob_{n+1}(\phi_t)=d_2(\mu_{n+1},\nu_{n+1},\phi_{n+1}),$$
  for some 2-cochain  $(\mu_{n+1},\nu_{n+1},\phi_{n+1})\in C^2_G(\phi,\phi).$ Take
  $$(\tilde{\mu_t},\tilde{\nu_t},\tilde{\phi_t})=(\mu_t+\mu_{n+1}t^{n+1},\nu_t+\nu_{n+1}t^{n+1},\phi_t+\phi_{n+1}t^{n+1})$$.
  Observe that $(\tilde{\mu_t},\tilde{\nu_t},\tilde{\phi_t})$ satisfies  \ref{rbeqn1},\ref{rbeqn2} and \ref{rbeqn3} for $0\le r\le n+1$. So  $(\tilde{\mu_t},\tilde{\nu_t},\tilde{\phi_t})$ is an equivariant extension of $(\mu_t,\nu_t,\phi_t)$ of order $n+1$.

\end{proof}
\begin{cor}
  If $H^3_G(\phi,\phi)=0$, then every 2-cocycle in $C^2_G(\phi,\phi)$ is an infinitesimal of some equivariant deformation of $\phi.$
\end{cor}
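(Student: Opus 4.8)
The plan is to start from a $2$-cocycle $(\mu_1,\nu_1,\phi_1)\in C^2_G(\phi,\phi)$ and try to build, inductively, an equivariant deformation $(\mu_t,\nu_t,\phi_t)$ of $\phi$ having this cocycle as its infinitesimal. We set $\mu_0$, $\nu_0$, $\phi_0$ to be the given structures, take $(\mu_1,\nu_1,\phi_1)$ as prescribed, and then attempt to extend to higher order one step at a time. The cocycle condition on $(\mu_1,\nu_1,\phi_1)$ is exactly (by Remark~\ref{rbrem1}) the statement that equations \ref{rbeqn1}, \ref{rbeqn2} and \ref{rbeqn3} hold for $r=0$ and $r=1$, so we have an equivariant deformation of order $1$.

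Next I would run the induction: suppose we have constructed an equivariant deformation $(\mu_t,\nu_t,\phi_t)$ of order $n$ (with $n\ge 1$), i.e. $(\mu_i,\nu_i,\phi_i)$ for $0\le i\le n$ satisfying \ref{rbeqn1}--\ref{rbeqn3} for $0\le r\le n$. Form the obstruction cochain $Ob_{n+1}(\phi_t)\in C^3_G(\phi,\phi)$; by the theorem just proved it is a $3$-cocycle. Since $H^3_G(\phi,\phi)=0$, its cohomology class vanishes, so by the preceding theorem (the extension criterion) the order-$n$ deformation extends to an equivariant deformation of order $n+1$. This produces $(\mu_{n+1},\nu_{n+1},\phi_{n+1})$ with \ref{rbeqn1}--\ref{rbeqn3} holding for $0\le r\le n+1$.

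Finally, letting the induction run over all $n$, the resulting formal sums $\mu_t=\sum_{i\ge 0}\mu_i t^i$, $\nu_t=\sum_{i\ge 0}\nu_i t^i$, $\phi_t=\sum_{i\ge 0}\phi_i t^i$ satisfy \ref{rbeqn1}, \ref{rbeqn2} and \ref{rbeqn3} for every $r\ge 0$, which is precisely the list of conditions (i)--(vi) in Definition~\ref{rb2}, so $(\mu_t,\nu_t,\phi_t)$ is an equivariant deformation of $\phi$; and by construction its infinitesimal is the given $2$-cocycle $(\mu_1,\nu_1,\phi_1)$.

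The main subtlety is not in any single step — each step is supplied by an earlier result — but in making sure the induction is correctly set up: one should note that extending an order-$n$ deformation only modifies the degree-$(n+1)$ term and leaves $(\mu_i,\nu_i,\phi_i)$ for $i\le n$ untouched, so the partial deformations are genuinely nested and the limit formal power series is well defined. I would also remark that each newly produced $(\mu_{n+1},\nu_{n+1},\phi_{n+1})$ lies in $C^2_G(\phi,\phi)$, so the equivariance conditions (ii), (iv), (vi) hold automatically at every stage; this is what keeps the construction inside the equivariant setting rather than merely the non-equivariant one.
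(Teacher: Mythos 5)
Your proof is correct and is exactly the argument the paper intends: the corollary is stated as an immediate consequence of the two preceding results (obstructions are $3$-cocycles, and a deformation of order $n$ extends iff the obstruction class vanishes), and your induction starting from the order-$1$ deformation determined by the given $2$-cocycle fills in the standard details. Your closing remarks about the nesting of the partial deformations and the automatic equivariance of each $(\mu_{n+1},\nu_{n+1},\phi_{n+1})\in C^2_G(\phi,\phi)$ are accurate and address the only points the paper leaves implicit.
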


\section{Equivalence of equivariant deformations, and rigidity }\label{rbsec5}
Let   $(\mu_t,\nu_t,\phi_t)$  and $(\tilde{\mu_t},\tilde{\nu_t},\tilde{\phi_t})$ be two equivariant deformations of $\phi$. Recall from \cite{gmrb} that an equivariant formal isomorphism between the equivariant deformations $\mu_t$ and $\tilde{\mu_t}$ of an associative algebra A is a $k[[t]]$-linear G-automorphism $\Psi_t:A[[t]]\to A[[t]]$ of the  form  $\Psi_t=\sum_{i\ge 0}\psi_it^i$, where each $\psi_i$ is an equivariant $k$-linear map $A\to A$, $\psi_0(a)=a$, for all $a\in A$ and $\tilde{\mu_t}(\Psi_t(a),\Psi_t(b))=\Psi_t\mu_t(a,b),$ for all $a,b\in A$
\begin{defn}
  An equivariant  formal isomorphism from $(\mu_t,\nu_t,\phi_t)$  to  $(\tilde{\mu_t},\tilde{\nu_t},\tilde{\phi_t})$ is a pair  $(\Psi_t,\Theta_t)$, where $\Psi_t:A[[t]]\to A[[t]]$ and $\Theta_t:B[[t]]\to B[[t]]$ are equivariant formal isomorphisms from $\mu_t$ to $\tilde{\mu_t}$ and $\nu_t$ to $\tilde{\nu_t}$, respectively,  such that $$\tilde{\phi_t}\circ\Psi_t=\Theta_t\circ\phi_t.$$
  Two equivariant deformations $(\mu_t,\nu_t,\phi_t)$  and $(\tilde{\mu_t},\tilde{\nu_t},\tilde{\phi_t})$ are said to be equivalent if there exists an equivariant formal isomorphism  $(\Psi_t,\Theta_t)$ from $(\mu_t,\nu_t,\phi_t)$ to  $(\tilde{\mu_t},\tilde{\nu_t},\tilde{\phi_t})$.
\end{defn}
\begin{defn}
  Any equivariant deformation of $\phi:A\to B$ that is equialent to the deformation $(\mu_0,\nu_0,\phi)$ is said to be a trivial deformation.
\end{defn}
\begin{thm}
  The cohomology class of the infinitesimal of an equivariant  deformation $(\mu_t,\nu_t,\phi_t)$ of $\phi:A\to B$ is determined by the equivalence class of $(\mu_t,\nu_t,\phi_t)$.
\end{thm}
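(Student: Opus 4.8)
The plan is to unwind the definition of equivalence and read off how the infinitesimal transforms under an equivariant formal isomorphism by comparing coefficients of $t^1$ in the three structure identities. So suppose $(\mu_t,\nu_t,\phi_t)$ and $(\tilde{\mu_t},\tilde{\nu_t},\tilde{\phi_t})$ are equivalent, and let $(\Psi_t,\Theta_t)$ be an equivariant formal isomorphism between them, with $\Psi_t=\sum_{i\ge 0}\psi_i t^i$ and $\Theta_t=\sum_{i\ge 0}\theta_i t^i$, where $\psi_0=\mathrm{id}_A$, $\theta_0=\mathrm{id}_B$, each $\psi_i$ and $\theta_i$ equivariant, and
\[
\tilde{\mu_t}(\Psi_t(a),\Psi_t(b))=\Psi_t\mu_t(a,b),\qquad
\tilde{\nu_t}(\Theta_t(x),\Theta_t(y))=\Theta_t\nu_t(x,y),\qquad
\tilde{\phi_t}\circ\Psi_t=\Theta_t\circ\phi_t .
\]

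First I would extract the degree-one parts. The coefficient of $t^1$ in the first identity, using $\mu_0(a,b)=ab$ and $\tilde{\mu_0}=\mu_0$, gives $\tilde{\mu_1}(a,b)-\mu_1(a,b)=\psi_1(ab)-\psi_1(a)b-a\psi_1(b)=-\delta^1\psi_1(a,b)$; similarly the second identity gives $\tilde{\nu_1}-\nu_1=-\delta^1\theta_1$; and the coefficient of $t^1$ in the third identity, using $\tilde{\phi_0}=\phi_0=\phi$, gives $\phi\circ\psi_1+\tilde{\phi_1}=\phi_1+\theta_1\circ\phi$, that is $\tilde{\phi_1}-\phi_1=\theta_1\phi-\phi\psi_1$.

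Next I would package these into one equation in $C_G^\ast(\phi,\phi)$. Because $\psi_1$ and $\theta_1$ are equivariant, the triple $(-\psi_1,-\theta_1,0)$ lies in $C_G^1(\phi,\phi)$, and by the defining formula for $d^1$,
\[
d^1(-\psi_1,-\theta_1,0)=\bigl(-\delta^1\psi_1,\ -\delta^1\theta_1,\ \phi(-\psi_1)-(-\theta_1)\phi-\delta^0(0)\bigr)=\bigl(\tilde{\mu_1}-\mu_1,\ \tilde{\nu_1}-\nu_1,\ \tilde{\phi_1}-\phi_1\bigr).
\]
Thus $(\tilde{\mu_1},\tilde{\nu_1},\tilde{\phi_1})$ and $(\mu_1,\nu_1,\phi_1)$ differ by a coboundary, hence represent the same class in $H_G^2(\phi,\phi)$. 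Since equivalence of equivariant deformations is an equivalence relation, this shows the cohomology class of the infinitesimal depends only on the equivalence class of $(\mu_t,\nu_t,\phi_t)$.

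The argument is essentially a bookkeeping computation; the only points that need care are the sign conventions when identifying $\tilde{\mu_1}-\mu_1$ with $-\delta^1\psi_1$ (and the analogous statement for $\nu$) under the coboundary formula of the excerpt, and the observation that the chosen $1$-cochain is invariant, which is immediate from the equivariance of $\Psi_t$ and $\Theta_t$. I do not anticipate any real obstacle.
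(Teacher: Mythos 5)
Your proposal is correct and follows essentially the same route as the paper: compare coefficients of $t^1$ in the three intertwining identities and exhibit the difference of infinitesimals as $d^1$ of the $1$-cochain built from $\psi_1,\theta_1$ (you use $(-\psi_1,-\theta_1,0)$ where the paper uses $(\psi_1,\theta_1,0)$, which only flips which difference is the coboundary). Your version just spells out the coefficient extraction and the equivariance check that the paper leaves implicit.
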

\begin{proof}
  Let  $(\Psi_t,\Theta_t)$ from  $(\mu_t,\nu_t,\phi_t)$ to  $(\tilde{\mu_t},\tilde{\nu_t},\tilde{\phi_t})$ be an equivariant  formal  isomorphism. So, we have  $\tilde{\mu_t}\Psi_t=\Psi_t\circ \mu_t,$ $\tilde{\nu_t}\Theta_t=\Theta_t\circ \nu_t,$ and   $\tilde{\phi_t}\circ\Psi_t=\Theta_t\circ\phi_t.$ This implies that $\mu_1-\tilde{\mu_1}=\delta\psi_1$, $\nu_1-\tilde{\nu_1}=\delta\theta_1$ and $\phi_1-\tilde{\phi_1}=\phi\psi_1-\theta_1\phi$. So we have $d^1(\psi_1,\theta_1,0)=(\mu_1,\nu_1,\phi_1)-(\tilde{\mu_1},\tilde{\nu_1},\tilde{\phi_1}).$ This finishes the proof.
\end{proof}
\begin{defn}
  An equivariant associative algebra morphism $\phi:A\to B$ is said to be rigid if every deformation of $\phi$ is trivial.
\end{defn}
\begin{thm}\label{rb-100}
  A non-trivial equivariant deformation of an associative algebra morphism is equivalent to an equivariant  deformation whose n-infinitesimal is not a coboundary, for some $n\ge 1.$
\end{thm}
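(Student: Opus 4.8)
The plan is to run the classical Gerstenhaber-type reduction in the triple setting: if the lowest nonvanishing infinitesimal of a deformation is a coboundary, it can be absorbed by an equivariant formal isomorphism, producing an equivalent deformation whose first nonzero infinitesimal sits in strictly higher order; iterating, a deformation all of whose lowest infinitesimals are coboundaries is forced to be trivial.

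First I would argue by contradiction. Suppose $(\mu_t,\nu_t,\phi_t)$ is a non-trivial equivariant deformation of $\phi$ that is \emph{not} equivalent to any equivariant deformation admitting, for some $n\ge 1$, an $n$-infinitesimal that fails to be a coboundary. Since it is non-trivial, it is in particular not equal to $(\mu_0,\nu_0,\phi)$, so there is a least index $n\ge 1$ with $(\mu_n,\nu_n,\phi_n)\neq 0$; by the proposition on infinitesimals this $n$-infinitesimal is a $2$-cocycle in $C^2_G(\phi,\phi)$, and by our standing assumption it must be a coboundary, say $(\mu_n,\nu_n,\phi_n)=d^1(\psi_n,\theta_n,0)=(\delta^1\psi_n,\ \delta^1\theta_n,\ \phi\psi_n-\theta_n\phi)$, with $\psi_n\in C^1_G(A;A)$ and $\theta_n\in C^1_G(B;B)$.

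The key step is the reduction. Put $\Psi_t=\mathrm{id}_{A[[t]]}+\psi_n t^n$ and $\Theta_t=\mathrm{id}_{B[[t]]}+\theta_n t^n$; since $\psi_n,\theta_n$ are equivariant with vanishing constant term, these are $k[[t]]$-linear $G$-automorphisms with $t$-adically convergent inverses, hence equivariant formal isomorphisms as recalled at the start of this section. Transport $(\mu_t,\nu_t,\phi_t)$ along $(\Psi_t,\Theta_t)$, i.e. set $\bar\mu_t(a,b)=\Psi_t(\mu_t(\Psi_t^{-1}a,\Psi_t^{-1}b))$, $\bar\nu_t(x,y)=\Theta_t(\nu_t(\Theta_t^{-1}x,\Theta_t^{-1}y))$, and $\bar\phi_t=\Theta_t\circ\phi_t\circ\Psi_t^{-1}$. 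Associativity of $\bar\mu_t$ and $\bar\nu_t$, the morphism identity $\bar\phi_t(\bar\mu_t(a,b))=\bar\nu_t(\bar\phi_t(a),\bar\phi_t(b))$, and all equivariance conditions are inherited from $(\mu_t,\nu_t,\phi_t)$, being merely conjugated by $G$-equivariant isomorphisms; so $(\bar\mu_t,\bar\nu_t,\bar\phi_t)$ is again an equivariant deformation of $\phi$, equivalent to the original. Expanding in powers of $t$, using $\psi_0=\theta_0=\mathrm{id}$ and $\mu_i=\nu_i=\phi_i=0$ for $0<i<n$, the coefficients in orders $<n$ are unchanged (all zero), while at order $n$ one finds $\bar\mu_n=\mu_n-\delta^1\psi_n=0$, $\bar\nu_n=\nu_n-\delta^1\theta_n=0$ and $\bar\phi_n=\phi_n-(\phi\psi_n-\theta_n\phi)=0$. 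Thus the first possibly nonzero infinitesimal of the equivalent deformation $(\bar\mu_t,\bar\nu_t,\bar\phi_t)$ lies in order $\ge n+1$.

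Finally I would iterate: this produces a sequence of pairwise equivalent equivariant deformations $D_0=(\mu_t,\nu_t,\phi_t),D_1,D_2,\dots$ whose lowest nonvanishing infinitesimals occur in strictly increasing orders $n_1<n_2<\cdots$, together with equivariant formal isomorphisms $(\Psi^{(k)}_t,\Theta^{(k)}_t)\colon D_{k-1}\to D_k$, each of the form identity plus terms of order $\ge n_k$. Since $n_k\to\infty$, the composites $\Psi^{(K)}_t\circ\cdots\circ\Psi^{(1)}_t$ and $\Theta^{(K)}_t\circ\cdots\circ\Theta^{(1)}_t$ stabilize coefficient-by-coefficient, hence converge $t$-adically to equivariant formal isomorphisms $\Psi_t,\Theta_t$ with $\psi_0=\theta_0=\mathrm{id}$; passing to the limit in the relations defining equivalence, $(\Psi_t,\Theta_t)$ carries $D_0$ to a deformation all of whose positive-order terms vanish, namely $(\mu_0,\nu_0,\phi)$. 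So $(\mu_t,\nu_t,\phi_t)$ is trivial, contradicting the hypothesis, and the theorem follows. \textbf{The main obstacle} is this last paragraph: one must set up the $t$-adic convergence of the infinite composition of isomorphisms carefully and check that it stays compatible with the $G$-action and with the triple identity $\tilde\phi_t\circ\Psi_t=\Theta_t\circ\phi_t$ in the limit; the degree-$n$ computation in the reduction step, while routine, also has to be carried out with the sign convention $\Psi_t=\mathrm{id}+\psi_n t^n$ (so that transport subtracts $\delta^1\psi_n$), matching the normalization already used in the proof that the infinitesimal's cohomology class is an equivalence invariant.
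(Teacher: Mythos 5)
Your proposal is correct and follows essentially the same route as the paper: absorb a coboundary $n$-infinitesimal by conjugating with $\Psi_t=\mathrm{id}+\psi t^n$, $\Theta_t=\mathrm{id}+\theta t^n$, kill all terms up to order $n$, and iterate (the paper only sketches the iteration, which you flesh out with the $t$-adic limit). The one small point you gloss over is that a general $1$-cochain is a triple $(\psi,\theta,m)$ with $m\in C_G^0(A;B)$, and one must first note $d(\psi,\theta,m)=d(\psi,\theta+\delta m,0)$ to justify taking the third component to be zero, as the paper does.
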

\begin{proof}
  Let $(\mu_t,\nu_t,\phi_t)$ be an equivariant deformation of $\phi$ with n-infinitesimal $(\mu_n,\nu_n,\phi_n)$, for some $n\ge 1.$ Assume that there exists a 1-cochain $(\psi,\theta, m)\in C_G^1(\phi,\phi)$ with $d(\psi,\theta, m)=(\mu_n,\nu_n,\phi_n).$ Since $d(\psi,\theta, m)=d(\psi,\theta+\delta m, 0)$, without any loss of generality we may assume $m=0.$ This gives $\mu_n=\delta \psi$, $\nu_n=\delta\theta$, $\phi_n=\phi\psi-\theta\phi$. Take $\Psi_t=Id_A+\psi t^n$, $\Theta_t=Id_B=\theta t^n$. Define $\tilde{\mu_t}=\Psi_t\circ \mu_t\circ \Psi_t^{-1},$ $\tilde{\nu_t}=\Theta_t\circ \nu_t\circ\Theta_t^{-1},$ and   $\tilde{\phi_t}=\Theta_t\circ\phi_t\circ\Psi_t^{-1}$. Clearly, $(\tilde{\mu_t},\tilde{\nu_t},\tilde{\phi_t})$ is an equivariant deformation of $\phi$ and $(\Psi_t,\Theta_t)$ is an   equivariant  formal isomorphism from $(\mu_t,\nu_t,\phi_t)$  to  $(\tilde{\mu_t},\tilde{\nu_t},\tilde{\phi_t})$. For $u,v\in A,$ we have  $\tilde{\mu_t}(\Psi_tu,\Psi_tv)=\Psi_t(\mu_t(u,v)),$ which implies $\tilde{\mu_i}=0,$ for $1\le i\le n.$ For $u,v\in B$, we have $\tilde{\nu_t}(\Theta_tu,\Theta v)=\Theta_t( \nu_t(u,v)),$ which implies $\tilde{\nu_i}=0,$ for $1\le i\le n.$  For $u\in A,$  we have  $\tilde{\phi_t}(\Psi_tu)=\Theta_t(\phi_tu), $ which gives $\phi_i=0,$  for $1\le i\le n.$ So $(\tilde{\mu_t},\tilde{\nu_t},\tilde{\phi_t})$ is equivalent to the given deformation and $(\tilde{\mu_i},\tilde{\nu_i},\tilde{\phi_i})=0,$ for $1\le i\le n.$ We can repeat the arguement to get rid off any infinitesimal that is a coboundary. So the process must stop if the deformation is nontrivial.
\end{proof}
An immediate consequence of the Theorem \ref{rb-100} is following corollary.
\begin{cor}
  If $H_G^2(\phi,\phi)=0,$ then $\phi:A\to B$ is rigid.
\end{cor}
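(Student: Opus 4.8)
The plan is to derive the result directly from Theorem~\ref{rb-100}. Suppose, for contradiction, that $\phi:A\to B$ is not rigid. Then there exists a non-trivial equivariant deformation $(\mu_t,\nu_t,\phi_t)$ of $\phi$. By Theorem~\ref{rb-100}, this deformation is equivalent to an equivariant deformation $(\tilde\mu_t,\tilde\nu_t,\tilde\phi_t)$ whose $n$-infinitesimal $(\tilde\mu_n,\tilde\nu_n,\tilde\phi_n)$ is \emph{not} a coboundary in $C^2_G(\phi,\phi)$, for some $n\ge 1$.

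On the other hand, by the Proposition preceding this corollary, the $n$-infinitesimal of any equivariant deformation is a $2$-cocycle in $C^2_G(\phi,\phi)$; in particular $(\tilde\mu_n,\tilde\nu_n,\tilde\phi_n)$ is a cocycle. Since we are assuming $H^2_G(\phi,\phi)=0$, every $2$-cocycle in $C^2_G(\phi,\phi)$ is a coboundary. Hence $(\tilde\mu_n,\tilde\nu_n,\tilde\phi_n)$ is a coboundary, contradicting the conclusion of the previous paragraph. Therefore no non-trivial equivariant deformation of $\phi$ exists, i.e.\ every equivariant deformation of $\phi$ is trivial, which is exactly the statement that $\phi$ is rigid.

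There is essentially no computational content here; the corollary is a formal consequence of the two facts that $n$-infinitesimals are cocycles and that equivalence can always be used to strip off a coboundary $n$-infinitesimal (Theorem~\ref{rb-100}). The only point requiring a little care is the implicit appeal to Theorem~\ref{rb-100}: one must note that its proof produces, after finitely many equivalence steps, a deformation whose first non-vanishing infinitesimal is a genuine non-coboundary \emph{provided the deformation is non-trivial}, and it is precisely the vanishing of $H^2_G(\phi,\phi)$ that forbids such a non-coboundary from existing, forcing the stripping process to terminate at the trivial deformation. So the main (and only) obstacle is simply to invoke the earlier results in the correct logical order; no new ideas or estimates are needed.
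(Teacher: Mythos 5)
Your argument is correct and is exactly the intended one: the paper states this corollary as an immediate consequence of Theorem~\ref{rb-100}, and your contradiction argument (a non-trivial deformation would yield a non-coboundary $n$-infinitesimal, which is nevertheless a cocycle and hence a coboundary when $H^2_G(\phi,\phi)=0$) is the standard way to make that implication explicit.
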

From Proposition \ref{rb-99} and Theorem \ref{rb-100}, we conclude following Corollary.
\begin{cor}
  If  $H_G^2(A,A)=0,$  $H_G^2(B,B)=0,$  and $H_G^1(A,B)=0,$ then  $\phi:A\to B$ is rigid.
\end{cor}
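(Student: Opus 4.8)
The plan is to chain together two facts that have already been proved in the paper, so that the corollary becomes a one-line deduction. First I would apply Proposition \ref{rb-99} in the case $n=2$. Its hypotheses in that case read $H_G^2(A,A)=0$, $H_G^2(B,B)=0$ and $H_G^{1}(A,B)=0$, which are exactly the three assumptions in the statement; hence the proposition gives $H_G^2(\phi,\phi)=0$. The only thing to verify is that the index shift matches, i.e. that the term $H_G^{n-1}(A,B)$ of Proposition \ref{rb-99} specializes to $H_G^{1}(A,B)$ when $n=2$, which it does.

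Second, I would feed this vanishing into the corollary stated just before the present one (itself a consequence of Theorem \ref{rb-100}): since $H_G^2(\phi,\phi)=0$, the equivariant associative algebra morphism $\phi:A\to B$ is rigid. Combining the two steps completes the proof. If one prefers an argument that does not quote the intermediate corollary, one can instead reason directly: for an arbitrary equivariant deformation $(\mu_t,\nu_t,\phi_t)$ of $\phi$, any nonzero $n$-infinitesimal $(\mu_n,\nu_n,\phi_n)$ is a $2$-cocycle in $C_G^2(\phi,\phi)$, hence a coboundary because $H_G^2(\phi,\phi)=0$, so by Theorem \ref{rb-100} no such deformation can be non-trivial; thus $\phi$ is rigid.

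I do not expect any genuine obstacle here, since both ingredients are already in hand; the ``hard part'', such as it is, is merely bookkeeping the degree conventions so that Proposition \ref{rb-99} is invoked with the correct value of $n$ and the hypotheses are matched verbatim.
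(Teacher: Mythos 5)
Your proposal is correct and matches the paper's own derivation exactly: the paper likewise obtains the corollary by applying Proposition \ref{rb-99} with $n=2$ to get $H_G^2(\phi,\phi)=0$ and then invoking Theorem \ref{rb-100} (via the preceding corollary) to conclude rigidity. No gaps.
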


%\section*{References}
%
%\bibliography{rb2}

\end{document}